\documentclass[letterpaper, 10 pt, journal, twoside]{ieeetran} 
\usepackage{times}
\usepackage{amsthm}
\usepackage{mathrsfs}
\usepackage{amsmath} 
\usepackage{mathtools} 
\usepackage{graphicx,amsmath,amssymb,algorithmic}
\usepackage{cite}
\usepackage{dsfont}
\bibliographystyle{IEEEtran}
\usepackage{times}
\usepackage{mathrsfs}
\usepackage{psfrag}
\usepackage[colorlinks=true, urlcolor=blue, citecolor=blue, linkcolor=blue]{hyperref}
\usepackage{mathtools} 
\usepackage{graphicx,amsmath,amssymb,algorithmic,color}
\usepackage{cite}
\usepackage{tikz} 
\usepackage{float}
\usepackage{epstopdf}
\usepackage{dsfont}

\newcommand{\sign}{\text{sign}}
\newtheorem{nncorollary}{\bf Corollary}
\newenvironment{corollary}{\begin{nncorollary}\it}{\hfill \hspace*{1pt}\hfill $\diamond$ \end{nncorollary}}
\newtheorem{nntheorem}{\bf Theorem}
\newenvironment{theorem}{\begin{nntheorem}\it}{\hfill \hspace*{1pt}\hfill $\diamond$\end{nntheorem}}
\newtheorem{nndefinition}{\bf Definition}
\newenvironment{definition}{\begin{nndefinition}\it}{\hfill \hspace*{1pt}\hfill $\diamond$\end{nndefinition}}
\newtheorem{nnproposition}{\bf Proposition}
\newenvironment{proposition}{\begin{nnproposition}\it}{\hfill \hspace*{1pt}\hfill $\diamond$\end{nnproposition}}
\newtheorem{nnproblem}{\bf Problem}
\newenvironment{problem}{\begin{nnproblem}\it}{\hfill \hspace*{1pt}\hfill $\diamond$\end{nnproblem}}
\newtheorem{nnlemma}{\bf Lemma}
\newenvironment{lemma}{\begin{nnlemma}\it}{\hfill \hspace*{1pt}\hfill $\diamond$\end{nnlemma}}
\newtheorem{nnremark}{\bf Remark}
\newenvironment{remark}{\begin{nnremark}}{\hfill \hspace*{1pt}\hfill $\circ$\end{nnremark}}

\def\ran{\mathtt{Ran}}
\def\dom{\mathtt{dom}}

\usepackage{mathtools}

\DeclarePairedDelimiter\abs{\lvert}{\rvert}%
\makeatletter
\let\oldabs\abs
\def\abs{\@ifstar{\oldabs}{\oldabs*}}
\newcommand{\norm}[1]{\left\lVert#1\right\rVert}
\newcommand{\source}{{THIS IS A PREPRINT VERSION. IF YOU FOUND THIS READING USEFUL FOR YOUR RESEARCH PLEASE CITE THE PUBLISHED VERSION DOI: \href{https://doi.org/10.1016/j.automatica.2022.110346}{https://doi.org/10.1016/j.automatica.2022.110346}}}
 \usepackage{fancyhdr}
\pagestyle{fancy}
\fancyhf{}

\chead{\source}
\rhead{\thepage}
\makeatletter

\def\ps@IEEEtitlepagestyle{}
\title{\LARGE \bf 
Design of Saturated Boundary Control for Hyperbolic Systems with In-domain Disturbances}

\author{Suha Shreim, Francesco Ferrante, and Christophe Prieur 
\thanks{S. Shreim and C. Prieur Univ. Grenoble Alpes, CNRS, Grenoble INP, GIPSA-lab, 38000 Grenoble, France. Email: \{suha.shreim, christophe.prieur\}@gipsa-lab.fr}
\thanks{Francesco Ferrante is with Department of Engineering, University of Perugia, Via G. Duranti, 67, 06125 Perugia, Italy. Email: francesco.ferrante@unipg.it}
\thanks{This work has been partially supported by MIAI@Grenoble Alpes (ANR- 19-P3IA-0003)}
}
\thispagestyle{empty} 
 \begin{document}        
 \maketitle                                       
\begin{abstract}            
Boundary feedback control design is studied for 1D hyperbolic systems with an in-domain disturbance and a boundary feedback controller under the effect of actuator saturation. Nonlinear semigroup theory is used to prove well-posedness of mild solution pairs to the closed-loop system. Sufficient conditions in the form of dissipation functional inequalities are derived to establish global stability for the closed-loop system and $\mathcal{L}^2$-stability in presence of in-domain disturbances. The control design problem is then recast as an optimization problem over linear matrix inequality constraints. Numerical results are shown to validate the effectiveness of the proposed control design.\end{abstract}
\begin{IEEEkeywords}
Nonlinear systems, infinite dimensional systems, Lyapunov methods, saturation functions, hyperbolic systems. 
\end{IEEEkeywords}
Partial differential equations are mathematical expressions which are found to be of great importance in the modeling of many physical systems that are described simultaneously via spatial and temporal variables. Light propagation
	in optic fibers, blood flow in the vessels, plasma
	in laser, liquid metals in cooling systems, road traffic, acoustic waves, and electromagnetic waves are all examples of systems modeled via PDEs that can be seen in civil, nuclear, mechanical, quantum, and chemical engineering (see
	\cite{1} and \cite{3} for more examples). Thus, the importance of studying the analysis and control of physical systems modeled via PDEs is growing more and more in the community of automatic control. 
	In fact, what renders this topic challenging is the infinite-dimensional nature of the application. In the majority of the cases, actuators and sensors are located in the boundaries of the system. In other words, in-domain control, which while theoretically facilitates the stability analysis and control of PDEs, is impractical in the real world. Thus, we can find a lot of research conducted on the topic of boundary control \cite{3}. In particular, different control strategies, from Lyapunov stability and stabilization methods \cite{1},\cite{trinh2017design}, to backstepping control \cite{russell1978controllability}, \cite{3} and frequency domain approaches \cite{litrico2009modeling} have been applied on PDEs.
	
	Another major problem that faces control engineers is the presence of actuator saturation threatens the control system. Neglecting the presence of this saturation can be a source of undesirable and dangerous behaviors in the closed-loop system \cite{5}. That is why, if one wishes to conclude a realistic, safe control design, it is of utmost necessity to take into account this saturation within the system modeling. Unfortunately, this means that we are adding a nonlinearity to our model. Researchers have been studying several methods to tackle saturation problems in closed-loop systems as we can see in \cite{2}, \cite{zaccarian2011modern} or \cite{slemrod1989feedback} and some examples of extensions of those works are present in \cite{seidman2001note} and \cite{marx2017cone} . Stability analysis of PDEs in the presence of saturation has been studied in the math community {\cite{haraux2018nonlinear}, \cite{alabau2012some}}, but it is still an open research area, especially from an automatic control viewpoint {\cite{xu2019saturated}, \cite{jacob2020remarks}}. {To the best of our knowledge, the particular problem of designing a boundary controller under the effect of saturation to stabilize hyperbolic systems has not been studied in previous works}. As we have seen in \cite{6}, a natural approach to study the stability problem is to combine both Lyapunov theory and cone-bounded sector conditions (see more about sector conditions in \cite{8},\cite{2}, \cite{castelan2008control}).\\
	Studies on the well-posedness of infinite dimensional systems in the presence of nonlinearities have been presented in \cite{barbu2010nonlinear} \cite{tucsnak2009observation}, and \cite{barbu1976nonlinear} based on semigroup theory. In \cite{wellposed}, the authors make use of infinite dimensional linear systems theory to rewrite a linear PDE and interconnect it with a static nonlinearity. Only few papers study the well-posedness of hyperbolic PDEs in the presence of saturated boundary nonlinearity. More specifically, \cite{6} considers the wave equation, whereas \cite{dus2020stabilization} analyses the stability of BV solutions.\\
	In this work, we focus on systems of one-dimensional conservation laws modeled as a system of linear hyperbolic PDEs. The aim of this article is to study the class of hyperbolic systems in the presence of nonlinear control laws \cite{shreim2020design} as well as an in-domain exogenous disturbance \cite{ferrante2020boundary}. In this paper, we consider a classical saturation function and we present a systematic approach to design static boundary controllers to ensure closed-loop exponential stability and robustness with quantifiable margins with respect to in-domain energy bounded disturbances. To {achieve} this goal, we first prove the existence and uniqueness of solution to our closed-loop system. Then, we prove the global exponential stability by using a sector condition and a suitable Lyapunov functional. The proposed conditions are embedded into a convex optimization setup to enable the design of a controller minimizing the effect of the disturbance on the closed-loop system.\\
	The paper is organized as follows. Section \ref{ProblemStatement} illustrates the problem we solve and defines the notion of solution we use.
	Section \ref{wellposednessandReg} tackles the well-posedness of the closed-loop system using properties of non-accretive operators. Section \ref{stabilityanalysis} is dedicated to Lyapunov analysis and provides the sufficient conditions for stability. Furthermore, {it} presents the control design problem in the sense of an optimization problem which gives an optimal control gain. Section \ref{numerical} validates the effectiveness of the proposed design algorithm through a numerical example. 
	\subsection{Notation}
	The sets $\mathbb{R}_{\geq 0}$ and $\mathbb{R}_{> 0}$ represent the set of nonnegative and positive real scalars, respectively. The symbols $\mathbb{S}^n_p$ and $\mathbb{D}^n_p$ denote, respectively, the set of real $n\times n$ symmetric positive definite matrices and the set of diagonal positive definite matrices. For a matrix $A \in \mathbb{R}^{n\times m}$, $A^\top$ denotes the transpose of $A$ and $\norm{A}$ denotes the 2-induced matrix-norm of $A$. For a symmetric matrix $A$, positive
	definiteness (negative definiteness) and positive semidefiniteness
	(negative semidefiniteness) are denoted, respectively, by $A>0$ $(A<0)$ and $A\geq 0$ $(A\leq 0)$. In partitioned symmetric matrices, the symbol $*$ stands for symmetric blocks. For a vector $x\in \mathbb{R}^n$, $\vert x \vert$ denotes its Euclidean norm. 
	Let $X$ and $Y$ be normed linear spaces, the symbol $\mathcal{L}(X, Y)$ 
	denotes the space of all bounded linear operators from $X $ to
	$Y $. For $U\subset \mathbb{R}$, we denote by {$\norm{f}_{\mathcal{L}^2(U)} = (\int_U \vert f(z)\vert^2 dz)^{\frac{1}{2}}$}, the $\mathcal{L}^2$-norm of \footnote{In this paper, we only consider Lebesegue measurable functions.}$f$. Given $f : U \subset \mathbb{R} \xrightarrow{} V$, we
	say that $f\in \mathcal{L}^2$ if $f$ is measurable and $\norm{f}_{\mathcal{L}^2{(U)}}$ is finite.
	For an open $U\subset \mathbb{R}$ and a normed linear space  $V\subset \mathbb{R}^n$, $\mathcal{H}^1(U,V):= \{f\in \mathcal{L}^2(U,V): f$ is locally absolutely continuous on $U, \frac{d}{dz}f\in \mathcal{L}^2(U,V)\}$ where $\frac{d}{dz}$ stands for the weak derivative of $f$. The symbol $Df(x)$ indicates the Fréchet derivative (when it exists) of the function $f$ at $x$.The symbol $\mathcal{C}^1(U,V)$ denotes the set of functions  $f: U\xrightarrow[]{} V$ that are continuously
	differentiable. Moreover, the symbol $\mathcal{C}^\infty_c(U,V)$  denotes the set of smooth compactly supported functions $f: U\xrightarrow[]{} V$.

	\section{Problem Statement}\label{ProblemStatement}
	\subsection{Problem setup}
	We consider the boundary feedback control of the following $n$ linear 1-D hyperbolic PDEs formally written as: 
	\begin{equation}\label{systemmm}
		\begin{array}{ll}
			X_t(t,z) + \Lambda X_z(t,z)= N d(t,z) & \forall (t,z) \in \mathbb{R}_{\geq 0}\times(0,1)   \\
			X(t,0)= H X(t,1)+ B \sigma(u(t)) & \forall t \in \mathbb{R}_{\geq 0}\\
			X(0,z)= X_0(z) & \forall z \in (0,1)
		\end{array}
	\end{equation} 
	where $t \in \mathbb{R}_{\geq 0}$ and $z \in (0,1)$ are the two independent variables, respectively, time and space, $ z \mapsto X(.,z) \in \mathbb{R}^n$ is the state, and $z\mapsto d(.,z)\in \mathbb{R}^q$ is an exogenous in-domain disturbance. We assume also that the matrices $\Lambda \in \mathbb{D}^n_p$, $H\in \mathbb{R}^{n\times n}$, $B\in \mathbb{R}^{n\times m}$ and $N\in \mathbb{R}^{n\times q}$ are given and that the state $X(\cdot,z)$ is measurable only at the boundary point $z = 1$. Specifically, the measurable output of the system reads as $y= X(\cdot,1)$.\\
	Let $u:= K X(\cdot,1)$ where $K\in \mathbb{R}^{m\times n}$ is the control gain to be designed and the function $u\mapsto \sigma (u)$ is the symmetric decentralized saturation function with saturation levels $\overline{u}_1,\overline{u}_2, \dots, \overline{u}_{m}\in\mathbb{R}_{>0}$, whose components for each $u\in\mathbb{R}^m$ are defined as:
	\begin{equation*}
		\sigma(u)_i=\sigma(u_i):= \min(\vert u_i\vert, \overline{u}_{i})\sign (u_i)\quad i=1,2,\dots, m
	\end{equation*}
	Our goal is to design the gain $K$ to induce closed-loop stability with quantifiable convergence rate and robustness margins with respect to the exogenous input $d$. 
	For convenience, we define the function $u \mapsto \phi (u)$ which is the symmetric decentralized dead-zone nonlinearity function given by the following expression (see \cite[page 40]{2}):\\
	\begin{equation}
		\phi(u_i) := 
		\sigma(u_i)-u_i
	\end{equation} 
	where $\phi : \mathbb{R}^m \xrightarrow[]{} \mathbb{R}^m$.
	By setting $H_{cl}:= H+BK$, the closed-loop system turns into:
\small\begin{equation}\label{cls}
	\begin{array}{ll}
		X_t(t,z) + \Lambda X_z(t,z)= Nd(t,z) & \forall (t,z) \in \mathbb{R}_{>0}\times(0,1)   \\
		X(t,0)= H_{cl}X(t,1) + B\phi(KX(t,1)) & \forall t \in \mathbb{R}_{\geq 0}\\
		X(0,z)= X_0(z) & \forall z \in (0,1)
	\end{array}
\end{equation}
\begin{remark}
   {Even though we only study the homodirectional case with $\Lambda \in \mathbb{D}_p^n$, the results can be directly extended to a heterodirectional case with $\Lambda = \begin{pmatrix}
        \Lambda_+\\ \Lambda_-
    \end{pmatrix}$ where $\Lambda_+ \in \mathbb{D}_p^r$, $\Lambda_- \in \mathbb{D}_p^s$ and $s+r=n$. This is done through a change in variable.}
\end{remark}

	\subsection{Notion of solution}
	Similarly as in \cite{barbu2010nonlinear}, we focus on mild solution pairs to (\ref{systemmm}). As in \cite{curtain2012introduction}, we start by reformulating the closed-loop system as an abstract differential equation. 
	Consider now the following operators defined, respectively, on the Hilbert spaces $\mathcal{L}^2(0,1; \mathbb{R}^n)$ and $\mathcal{L}^2(0,1;\mathbb{R}^q)$ equipped with their respective standard inner products:
	\begin{equation}
		\begin{split}
			\mathcal{A}:& \mathcal{D}(\mathcal{A})\xrightarrow[]{} \mathcal{L}^2(0,1;\mathbb{R}^n)  \\
			&X\mapsto-\Lambda X_z(z)\\
			\mathcal{N}:& \mathcal{L}^2(0,1;\mathbb{R}^q) \xrightarrow[]{} \mathcal{L}^2(0,1;{\mathbb{R}^n})\\
			&d\mapsto N d
		\end{split}
	\end{equation}
	where
	$$\mathcal{D}(\mathcal{A}):= \{X\in \mathcal{H}^1(0,1;\mathbb{R}^n); X(0)=H_{cl}X(1)+B\phi(KX(1))\}$$
	Then, the closed-loop dynamics can be
	formally written as the following abstract system with state $X\in \mathcal{L}^2(0,1;\mathbb{R}^n)$ and exogenous input $d\in \mathcal{L}^2(0,1;\mathbb{R}^q)$
	\begin{equation}\label{abstract}
			\dot {X} = \mathcal{A} X + \mathcal{N} d		
	\end{equation} 
	In particular, we use the following notion of a mild solution pair for (\ref{abstract}). We recall the definition introduced in \cite[Definition 4.3, page 120]{barbu2010nonlinear}:
	\begin{definition}
		A mild solution pair for the system (\ref{abstract}), with the initial condition {$X(0,z)=X_0$} is a pair $(X,d)$ satisfying the following: functions $X\in \mathcal{C}(\dom X;\mathcal{L}^2(0,1;\mathbb{R}^n))$ and {$d\in \mathcal{L}^1(\dom d,\mathcal{L}^2(0,1;\mathbb{R}^q))$} where \footnote{{$\dom d$ and $\dom X$ can be either $[0,T]$ or $[0, \infty)$.}}$\dom X= \dom d$ is an interval of $\mathbb{R}_{\geq 0} $ including zero and for each $\epsilon >0$, there exists an $\epsilon$-approximate solution $z$ (using the terminology of \cite[Definition 4.2, page 129]{barbu2010nonlinear})  of $\dot {X} = \mathcal{A} X + \mathcal{N} d$ such that $\norm{X(t)-z(t)}\leq \epsilon$ for all $t\in \dom X$.
		
	\end{definition}
{In what follows, we use the shorthand notation $\mathcal{L}^2_n$ instead of $\mathcal{L}^2(0,1;\mathbb{R}^n)$ and, similarly, $\mathcal{L}^2_q$ instead of $\mathcal{L}^2(0,1;\mathbb{R}^q)$.} Now we state the problem we solve in this paper:
	\begin{problem}\label{solutionproblem}
		Given $H\in \mathbb{R}^{n\times n}, B\in \mathbb{R}^{n\times m}$, $N\in \mathbb{R}^{n\times q}$, and $\Lambda \in \mathbb{D}^n_p$. Design $K$ such that for some $\kappa$, $\omega$, $\gamma \in \mathbb{R}_{>0}$ and for each mild solution pair (X,d) to (\ref{abstract}) one has, for all $t\in \dom X$:
		\begin{equation}\label{lmiprob}
			\norm{X(t)}_{{{\mathcal{L}^2_n}}} \leq \kappa e^{-\omega t}\norm{{X_0}}_{{{\mathcal{L}^2_n}}} + \gamma \sqrt{\int^t_0 \norm{d(\theta)}^2_{{{\mathcal{L}^2_q}}} d\theta }
		\end{equation} 
	\end{problem}
{Inequality (\ref{lmiprob}) corresponds to a classical input-to-state-stability (ISS) bound for the abstract closed-loop system (\ref{abstract}). Sufficient conditions to ensure ISS for infinite dimensional systems are given in \cite{karafyllis2019input} and \cite{mironchenko2020input}}. The main contribution of this paper is to perform an optimal design of the control design of the control gain $K$ in order to minimize the ISS gain $\gamma$. In Section \ref{stabilityanalysis}, we provide sufficient conditions to get an explicit estimate of the ISS gain $\gamma$.
	\section{Well-posedness of the Closed-Loop System}\label{wellposednessandReg}
	In this section, we state the well-posedness of the closed-loop system (\ref{abstract}). Let us start by defining the notion of non-accretive operator inspired by \cite[Definition 3.1, page 97]{barbu2010nonlinear}:
	\begin{definition} 
		An operator $\mathcal{A}$ from $\mathcal{D}(\mathcal{A})$ to $\mathcal{L}^2(0,1;\mathbb{R}^n)$ is said to be non-accretive with respect to an inner product $ \langle\cdot ,\cdot \rangle$ if for every pair $(X_1, X_2)\in \mathcal{D}(\mathcal{A}) \times \mathcal{D}(\mathcal{A})$, the following inequality holds:
		\begin{equation}\label{accret}
			\langle\mathcal{A} X_1-\mathcal{A} X_2,X_1-X_2\rangle \leq 0
		\end{equation}
	\end{definition}
	Inspired by \cite[Appendix A, page 224]{1}, let us introduce the following  inner product on $\mathcal{L}^2(0,1;\mathbb{R}^n)$:
	\begin{equation}\label{Lmuspace}
		\langle X_1,X_2\rangle_{\mu}:= \int_0^1 e^{\mu z}X_1^\top  X_2 dz
	\end{equation}
	where $\mu>0$ will be selected later. {It is noted that this inner product is equivalent to the standard inner product in $\mathcal{L}^2$ since the function $z\mapsto e^{\mu z}$ is bounded from below and above on $[0,1]$}. We now use the previous definition to apply it on a suitable operator which will be vital in proving the uniqueness and {existence} of mild solution pairs to (\ref{abstract}).
	
	\begin{proposition}\label{accretprop}
		There exist $\mu>0$ and ${\rho} \in \mathbb{R}$ such that the operator $\mathcal{A}+{\rho} I$ is non-accretive (with respect to the scalar product 
		$\langle \cdot,\cdot\rangle_\mu$).
	\end{proposition}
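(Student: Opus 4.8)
The plan is to use the linearity of $\mathcal{A}$ to reduce the claim to a single-argument estimate, and then to split the resulting expression, via an integration by parts against the weight $e^{\mu z}$, into a pointwise boundary contribution and an interior integral. Since the rule $X\mapsto -\Lambda X_z$ is linear, for any pair $X_1,X_2\in\mathcal{D}(\mathcal{A})$ I set $W:=X_1-X_2\in\mathcal{H}^1(0,1;\mathbb{R}^n)$, so that $\mathcal{A}X_1-\mathcal{A}X_2=\mathcal{A}W=-\Lambda W_z$ and the inequality to be established reads
\[
\langle (\mathcal{A}+\rho I)W,W\rangle_\mu = -\int_0^1 e^{\mu z}W_z^\top \Lambda W\,dz + \rho\int_0^1 e^{\mu z}W^\top W\,dz \le 0 .
\]
Note that $W$ need not lie in $\mathcal{D}(\mathcal{A})$ (the boundary condition is not linear), but it is $\mathcal{H}^1$, so its weak derivative and its traces $W(0),W(1)$ are well defined and the manipulations below are legitimate.

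First I would rewrite the transport term. Because $\Lambda\in\mathbb{D}^n_p$ is diagonal, hence symmetric, one has $W_z^\top\Lambda W=\tfrac12\tfrac{d}{dz}(W^\top\Lambda W)$; integrating $\tfrac{d}{dz}(e^{\mu z}W^\top\Lambda W)$ over $(0,1)$ and rearranging yields the identity
\[
-\int_0^1 e^{\mu z}W_z^\top\Lambda W\,dz = \tfrac12\big[W(0)^\top\Lambda W(0)-e^{\mu}W(1)^\top\Lambda W(1)\big] + \tfrac{\mu}{2}\int_0^1 e^{\mu z}W^\top\Lambda W\,dz .
\]
Substituting, the quantity to be signed decomposes into a boundary term $\tfrac12[W(0)^\top\Lambda W(0)-e^{\mu}W(1)^\top\Lambda W(1)]$ and the interior integral $\int_0^1 e^{\mu z}W^\top(\tfrac{\mu}{2}\Lambda+\rho I)W\,dz$. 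The interior integral is disposed of at once: for any fixed $\mu>0$ it is nonpositive as soon as $\tfrac{\mu}{2}\Lambda+\rho I\le 0$, which holds whenever $\rho\le -\tfrac{\mu}{2}\norm{\Lambda}$ (recall $\norm{\Lambda}=\lambda_{\max}(\Lambda)$ for a diagonal positive definite matrix).

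The crux is the boundary term, and this is where the nonlinear coupling enters and where I expect the main difficulty. Since $X_1,X_2\in\mathcal{D}(\mathcal{A})$, both satisfy the boundary condition; subtracting gives $W(0)=H_{cl}W(1)+B\big(\phi(KX_1(1))-\phi(KX_2(1))\big)$. The key property I would invoke is that the decentralized dead-zone $u\mapsto\phi(u)=\sigma(u)-u$ is globally $1$-Lipschitz: each scalar component has slope $0$ in its linear region and $-1$ in its saturated region, so componentwise $\abs{\phi(a)_i-\phi(b)_i}\le\abs{a_i-b_i}$ and therefore $\abs{\phi(KX_1(1))-\phi(KX_2(1))}\le\abs{KW(1)}\le\norm{K}\,\abs{W(1)}$. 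This incremental bound, valid regardless of the operating point, yields the linear trace estimate $\abs{W(0)}\le(\norm{H_{cl}}+\norm{B}\norm{K})\,\abs{W(1)}=:c\,\abs{W(1)}$.

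Finally I would close the argument by a weight-versus-gain comparison. Using $W(0)^\top\Lambda W(0)\le\norm{\Lambda}\,\abs{W(0)}^2\le\norm{\Lambda}\,c^2\abs{W(1)}^2$ together with $W(1)^\top\Lambda W(1)\ge\lambda_{\min}(\Lambda)\abs{W(1)}^2$ (and $\lambda_{\min}(\Lambda)>0$), the boundary term is bounded above by $\tfrac12\big(\norm{\Lambda}\,c^2-e^{\mu}\lambda_{\min}(\Lambda)\big)\abs{W(1)}^2$. Since $\norm{\Lambda}$, $c$, and $\lambda_{\min}(\Lambda)$ are fixed constants, I choose $\mu>0$ large enough that $e^{\mu}\ge\norm{\Lambda}\,c^2/\lambda_{\min}(\Lambda)$, which renders the boundary term nonpositive; fixing this $\mu$ and then taking $\rho\le-\tfrac{\mu}{2}\norm{\Lambda}$ makes the interior term nonpositive as well, so the sum is $\le 0$, which is the desired non-accretivity. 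The order of the quantifiers is essential: $\mu$ must be selected first, because the constant $c$ controlling the boundary term does not depend on $\mu$, and only then is $\rho$ chosen as a function of $\mu$ to neutralize the positive interior contribution produced by the weight.
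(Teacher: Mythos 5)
Your proof is correct, and while its skeleton coincides with the paper's (reduce to a single increment $W=X_1-X_2$ by linearity of $X\mapsto-\Lambda X_z$, integrate by parts against $e^{\mu z}$ to isolate a boundary term and an interior term, exploit the $1$-Lipschitz property of the dead-zone $\phi$, and choose $\mu$ first and then $\rho$), you handle the decisive boundary estimate by a genuinely different and more elementary route. The paper packages the boundary contribution as a quadratic form in $(\tilde X(1),\tilde\phi)$, injects the Lipschitz bound as the quadratic constraint $\abs*{\tilde\phi}^2\le\abs*{K\tilde X(1)}^2$ through an S-procedure multiplier $\tau$, and closes with a Schur-complement argument to make the resulting matrix $\Omega$ negative definite; you instead pass directly to the scalar trace estimate $\abs*{W(0)}\le(\norm{H_{cl}}+\norm{B}\norm{K})\abs*{W(1)}$ and compare extreme eigenvalues of $\Lambda$ against the weight $e^{\mu}$. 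Your version is shorter and avoids the multiplier and the invertibility bookkeeping; the paper's version is deliberately cast in the same LMI formalism reused in Theorem~2 and would yield a slightly sharper admissible threshold on $\mu$, which is immaterial for a pure existence statement. Two points where you are actually more careful than the printed proof deserve note: you correctly observe that $W$ need not belong to $\mathcal{D}(\mathcal{A})$ because the boundary condition is nonlinear (membership in $\mathcal{H}^1$ suffices for the integration by parts and the traces), whereas the paper asserts $\tilde X\in\mathcal{D}(\mathcal{A})$; and you retain the factor $\Lambda$ in the interior term, leading to the correct requirement $\rho\le-\tfrac{\mu}{2}\lambda_{\max}(\Lambda)$, whereas the paper's final choice $\rho<-\tfrac{1}{2}\mu$ silently drops $\Lambda$ and is only valid after rescaling by $\lambda_{\max}(\Lambda)$.
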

	\begin{proof}
		Let $X_1, X_2 \in \mathcal{D(A)}$, $\Tilde{X}=X_1-X_2 \in \mathcal{D(A)}$ and $\Tilde{\phi}=\phi(KX_1(1))-\phi(KX_2(1))\in \mathbb{R}^m$. Let us prove (\ref{accret}) for the operator $\mathcal{A}+ {\rho} I$ for a suitable choice of $\mu$. First we can write the following:
		$$\langle(\mathcal{A}+{\rho} I)\Tilde{X},\Tilde{X}\rangle_\mu= \langle\mathcal{A}\Tilde{X},\Tilde{X}\rangle_{\mu} +{\rho} \langle\Tilde{X}, \Tilde{X}\rangle_{\mu}$$
		where $$ \langle\mathcal{A}\Tilde{X},\Tilde{X}\rangle_{\mu}= -\int^1_0 e^{\mu z} \Tilde{X}^\top \Lambda \Tilde{X}_z dz $$
		Using an integration by parts, we have: 
		$$\langle\mathcal{A}\Tilde{X},\Tilde{X}\rangle_{\mu}= -\frac{1}{2} e^{\mu z}\Tilde{X}^\top \Lambda \Tilde{X} \Big|^1_0 +\frac{1}{2} \mu \int^1_0 e^{\mu z} \Tilde{X}^\top \Lambda \Tilde{X} dz$$ 
		Thanks to the boundary condition in (\ref{cls}), we have: 
		\begin{equation}\label{tilde}
			\begin{split}
				\langle\mathcal{A}\Tilde{X}, \Tilde{X}\rangle_\mu = -\frac{1}{2} &\left( e^{\mu}\Tilde{X}(1)^\top\Lambda \Tilde{X}(1) -(H_{cl}\Tilde{X}(1)+B\Tilde{\phi})^\top\right. \\&\left. 
				\Lambda (H_{cl}\Tilde{X}(1)+B\Tilde{\phi})\right) +\frac{1}{2}\mu  \langle\Tilde{X}, \Tilde{X}\rangle_{\mu}\end{split}
		\end{equation}
		We can rewrite the previous equation as: 
		\begin{equation}\label{eq:4:12}
			\begin{split}
				\langle\mathcal{A}\Tilde{X}, \Tilde{X}\rangle_{\mu}=&
				\frac{1}{2}\mathcal{X}^\top \begin{pmatrix}  H_{cl}^\top \Lambda H_{cl}- e^{\mu}\Lambda & H_{cl}^\top \Lambda B\\
					* & B^\top \Lambda B
				\end{pmatrix}    
				\mathcal{X}\\&+\frac{1}{2}\mu  \langle\Tilde{X}, \Tilde{X}\rangle_\mu
			\end{split}
		\end{equation} where $\mathcal{X}\coloneqq \begin{pmatrix}
		\Tilde{X}(1)\\\Tilde{\phi}
	\end{pmatrix}$. 
		Hence, recalling that $\phi$ is 1-Lipschitz continuous {\footnote{{This follows from the fact $\phi$ is continuous and piecewise linear with a slope bounded by $1$.}}}, one has:
		$$
		\mathcal{X}^\top
		\begin{pmatrix}-K^\top K & & 0 \\ * & & I\end{pmatrix}
		\mathcal{X}\leq 0
		$$
		which by using, (\ref{eq:4:12}) gives:
		\begin{equation}
			\begin{split}
				\langle\mathcal{A}\Tilde{X}, \Tilde{X}\rangle_\mu \leq&   \frac{1}{2}\mathcal{X}^\top  
				\begin{pmatrix}
					H_{cl}^\top \Lambda H_{cl}- e^{\mu}\Lambda +\tau K^\top K& H_{cl}^\top \Lambda B\\
					* & B^\top \Lambda B-\tau I
				\end{pmatrix}{\mathcal{X}} \\ &+\frac{1}{2}\mu  \langle\Tilde{X}, \Tilde{X} \rangle_\mu
			\end{split}
		\end{equation}
		for any $\tau>0$. Pick $\tau$ such that $B^\top \Lambda B - \tau I  \leq -I $.
		Thus, we can write
		\begin{equation}
			\begin{split}	\langle\mathcal{A}\Tilde{X}, \Tilde{X}\rangle_\mu \leq&  \frac{1}{2}\mathcal{X}^\top \begin{pmatrix}
					H_{cl}^\top \Lambda H_{cl}- e^{\mu}\Lambda +\tau K^\top K & H_{cl}^\top \Lambda B\\
					* & -I
				\end{pmatrix}\mathcal{X}\\ &+\frac{1}{2}\mu  \langle\Tilde{X}, \Tilde{X}\rangle_\mu
			\end{split}
		\end{equation}
		Now, consider the following matrix:
		\begin{equation*}
			\Omega  := \begin{pmatrix}
				H_{cl}^\top \Lambda H_{cl}- e^{\mu}\Lambda +\tau K^\top K & H_{cl}^\top \Lambda B\\
				* & -I
		\end{pmatrix}\end{equation*} 
		From the Schur-complement lemma {(see \cite[page 34, Theorem 1.12]{zhang2006schur})}, one has that $\Omega < 0$ if and only if the following conditions hold 
		\begin{equation}\label{omegacond}
			\begin{array}{c}
			H_{cl}^\top \Lambda H_{cl}- e^{\mu}\Lambda +\tau K^\top K < 0 \\
				-I - (B^\top \Lambda H_{cl}) (H_{cl}^\top \Lambda H_{cl}- e^{\mu}\Lambda +\tau K^\top K)^{-1} (H_{cl}^\top \Lambda B)< 0
			\end{array}
		\end{equation} 
		Pick $\mu$ such that:
		\begin{equation}
	\mu > \ln \left (\norm{H_{cl}^\top \Lambda H_{cl}\Lambda^{-1} + \tau K^\top K\Lambda^{-1} + \norm{H_{cl}^\top \Lambda B}^2 \Lambda^{-1} }\right )
\end{equation}

 Thus, both conditions of (\ref{omegacond}) hold and $\Omega < 0$. Finally, choose ${\rho} <-\frac{1}{2} \mu$ and thus, (\ref{accret}) holds and the proof is concluded.
\end{proof}

Following the work of {\cite[page 97]{barbu2010nonlinear}}, we now prove that the non-accretive operator $\mathcal{A}+{\rho} I$ enjoys the following property:
	\begin{proposition}\label{rangeprop}
		There exists ${\rho}\in \mathbb{R}$ such that for all $\lambda>0$, the following range property holds 
		\begin{equation}\label{rangecomp}
			\ran(I+\lambda(\mathcal{A}+{\rho} I)) = \mathcal{L}^2(0,1;\mathbb{R}^n)
		\end{equation}
		where $\ran$ stands for the range.
	\end{proposition}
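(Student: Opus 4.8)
The plan is to prove the surjectivity claim (\ref{rangecomp}) by solving the resolvent equation explicitly. Fix $\lambda>0$ and an arbitrary $f\in\mathcal{L}^2_n$; I must produce $X\in\mathcal{D}(\mathcal{A})$ with $X+\lambda(\mathcal{A}+{\rho} I)X=f$. Since $\mathcal{A}X=-\Lambda X_z$, this is the first-order linear ODE $\lambda\Lambda X_z=(1+\lambda{\rho})X-f$ on $(0,1)$, to be solved together with the nonlinear constraint $X(0)=H_{cl}X(1)+B\phi(KX(1))$ that defines $\mathcal{D}(\mathcal{A})$. Because $\Lambda\in\mathbb{D}^n_p$ is diagonal and positive, the homogeneous transition matrix is the diagonal exponential $\Phi(z):=e^{\frac{1+\lambda{\rho}}{\lambda}\Lambda^{-1}z}$, so variation of constants expresses every $\mathcal{H}^1$ solution of the ODE through its boundary value as
\[
X(z)=\Phi(z)X(0)-\tfrac{1}{\lambda}\int_0^z\Phi(z-s)\Lambda^{-1}f(s)\,ds ,
\]
which, since $f\in\mathcal{L}^2_n\subset\mathcal{L}^1(0,1;\mathbb{R}^n)$ and $\Phi$ is bounded on $[0,1]$, automatically belongs to $\mathcal{H}^1(0,1;\mathbb{R}^n)$. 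Thus the only remaining requirement is the boundary constraint.

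The next step is to eliminate the PDE and reduce the problem to a finite-dimensional equation for the single unknown boundary vector $\eta:=X(0)\in\mathbb{R}^n$. Writing $\Phi:=\Phi(1)$ and $v:=\tfrac1\lambda\int_0^1\Phi(1-s)\Lambda^{-1}f(s)\,ds$, the representation above gives $X(1)=\Phi\eta-v$, so the boundary condition becomes the fixed-point equation $\eta=\mathcal{G}(\eta)$ with $\mathcal{G}(\eta):=H_{cl}(\Phi\eta-v)+B\phi\bigl(K(\Phi\eta-v)\bigr)$. Since $\phi$ is $1$-Lipschitz (as already used in Proposition \ref{accretprop}), $\mathcal{G}$ is Lipschitz with constant at most $(\norm{H_{cl}}+\norm{B}\,\norm{K})\,\norm{\Phi}$, and $\norm{\Phi}$ is governed by the sign of the coefficient $\tfrac{1+\lambda{\rho}}{\lambda}$; exploiting the strictly dissipative choice ${\rho}<0$ fixed in Proposition \ref{accretprop} together with $\Lambda>0$, this norm is strictly below one for small $\lambda$ (indeed $\norm{\Phi}\to 0$ as $\lambda\to 0^+$). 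Hence for a suitable $\lambda_0>0$ the map $\mathcal{G}$ is a strict contraction on $\mathbb{R}^n$; the Banach fixed-point theorem then yields a unique $\eta$, and therefore a unique $X\in\mathcal{H}^1$ that both solves the ODE and satisfies the boundary condition, i.e. $X\in\mathcal{D}(\mathcal{A})$ with the resolvent identity holding. This establishes (\ref{rangecomp}) for $\lambda=\lambda_0$, and uniqueness is inherited from the non-accretivity inequality of Proposition \ref{accretprop}.

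Finally, I would upgrade the single-$\lambda_0$ result to all $\lambda>0$. The cleanest route is the standard property of (non-)accretive operators that, once the range condition holds for one $\lambda_0>0$, it holds for every $\lambda>0$: the resolvent is nonexpansive by Proposition \ref{accretprop}, and a short Neumann/contraction argument in the parameter $\lambda$ propagates surjectivity (see \cite{barbu2010nonlinear}). An equivalent self-contained route is a homotopy/degree argument at the finite-dimensional level: pairing the resolvent equation with $X$ in $\langle\cdot,\cdot\rangle_\mu$ and invoking Proposition \ref{accretprop} at the admissible base point $X_2=0$ — note $0\in\mathcal{D}(\mathcal{A})$ and $\mathcal{A}0=0$, since $\phi(0)=0$ — furnishes an a priori estimate of the form $\norm{X}_\mu\le\norm{f}_\mu$, which keeps the zeros of the associated finite-dimensional map $\eta\mapsto\eta-\mathcal{G}(\eta)$ inside a fixed ball and lets Leray--Schauder degree theory deliver a solution for every $\lambda>0$. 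The main obstacle throughout is precisely the nonlinear boundary condition: it rules out the classical linear resolvent construction and forces the finite-dimensional fixed-point/degree step, which is made to work by the $1$-Lipschitz bound on $\phi$ together with the strict contractivity of the transport transition matrix $\Phi$.
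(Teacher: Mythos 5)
Your construction is essentially the paper's: solve the resolvent ODE by variation of constants, reduce everything to a fixed-point equation for the boundary value $X(0)\in\mathbb{R}^n$, and close with Banach's fixed-point theorem using the $1$-Lipschitz bound on $\phi$. The gap is in the contraction estimate. With $\Phi=e^{\frac{1+\lambda\rho}{\lambda}\Lambda^{-1}}$ and $\Lambda\in\mathbb{D}^n_p$ one has $\norm{\Phi}=\max_i e^{(1+\lambda\rho)/(\lambda\lambda_i)}$, and as $\lambda\to 0^+$ each exponent behaves like $\frac{1}{\lambda\lambda_i}\to+\infty$ because $1+\lambda\rho\to 1>0$ for any fixed finite $\rho$. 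Hence $\norm{\Phi}\to+\infty$, not $0$: your claim that $\Phi$ is strictly contractive for small $\lambda$ is exactly backwards. The map $\mathcal{G}$ is a contraction only when $1+\lambda\rho<0$, i.e.\ $\lambda>-1/\rho$ with $\rho<0$ chosen negative enough that $e^{(1+\lambda\rho)/(\lambda\lambda_{\max}(\Lambda))}\left(\norm{H_{cl}}+\norm{B}\norm{K}\right)<1$; so the direct argument yields surjectivity only for large $\lambda$, and for no fixed $\rho$ does it cover a neighbourhood of $\lambda=0$. (To be fair, the paper's own bound $\alpha\le e^{\rho/\lambda_{\max}(\Lambda)}(\norm{H_{cl}}+\norm{BK})$ silently discards the $\frac{1}{\lambda\lambda_i}$ contribution and has the same difficulty; your instinct to add a propagation step from a single $\lambda_0$ to all $\lambda>0$ via the standard lemma for quasi-accretive operators is the right repair, and that lemma does not care whether the admissible $\lambda_0$ is large or small — but you must first correctly identify for which $\lambda_0$ the contraction actually holds.)

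Your ``self-contained'' degree-theoretic alternative also has a sign problem. Pairing $(I+\lambda(\mathcal{A}+\rho I))X=f$ with $X$ in $\langle\cdot,\cdot\rangle_\mu$ and using the non-accretivity inequality $\langle(\mathcal{A}+\rho I)X,X\rangle_\mu\le 0$ from Proposition \ref{accretprop} (with base point $0\in\mathcal{D}(\mathcal{A})$) gives $\langle f,X\rangle_\mu\le\norm{X}_\mu^2$, which bounds $\norm{X}_\mu$ from \emph{below} and yields no a priori estimate; the inequality $\norm{X}_\mu\le\norm{f}_\mu$ would require the opposite sign, i.e.\ accretivity of $\mathcal{A}+\rho I$ or the resolvent $I-\lambda(\mathcal{A}+\rho I)$. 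So the repair has to go through the abstract propagation lemma with the single-$\lambda_0$ step corrected, not through the a priori estimate as you wrote it.
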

	\begin{proof}
		We know that $$\ran(I+\lambda(\mathcal{A}+{\rho} I)) \subset \mathcal{L}^2(0,1;\mathbb{R}^n) $$
		Let us prove that \begin{equation}\label{range}
			\ran(I+\lambda(\mathcal{A}+{\rho} I)) \supset \mathcal{L}^2(0,1;\mathbb{R}^n)
		\end{equation}
		Pick any $f\in \mathcal{L}^2(0,1;\mathbb{R}^n)$, we show that there exists $X \in\mathcal{D}(\mathcal{A})$ such that $$(I+\lambda(\mathcal{A}+{\rho} I))X=f$$
		The above statement is equivalent to checking the existence of solution to the following boundary value problem:
		\begin{equation}\label{ode}
			\begin{array}{lc}
				I_{{\rho}} X(z){-}\lambda \Lambda X_z(z)= f(z)  &  \forall z\in (0,1) \\
				X(0)=H_{cl}X(1)+B\phi(KX(1))
			\end{array}
		\end{equation} 
		where $I_{{\rho}}:= (1 +\lambda {\rho}) I$.
		The solution for the first line of (\ref{ode}) is given by:
		{\begin{equation}\begin{aligned}
				X(z)= e^{\frac{1}{\lambda} \Lambda^{-1} I_{{\rho}} z}X(0){-}\int^z_0 e^{\frac{1}{\lambda} \Lambda^{-1}I_{\rho}(z-s)}\frac{1}{\lambda} \Lambda^{-1}f(s) ds\\ \forall z\in (0,1)
			\end{aligned}
		\end{equation}} In particular, one has:
		{\begin{equation*}\begin{split}
					X(1)=& e^{\frac{1}{\lambda} \Lambda^{-1}I_{{\rho}}} X(0){-}\int_0^1 e^{\frac{1}{\lambda} \Lambda^{-1}I_{\rho}(1-s)}\frac{1}{\lambda} \Lambda^{-1}f(s)ds\\=\colon& g_\lambda (X(0))
		\end{split}\end{equation*}}
		Then, the boundary condition is rewritten as:
		\begin{equation}\label{BCmodif}
			X(0)= H_{cl}g_\lambda(X(0))+B\phi(Kg_\lambda(X(0))
		\end{equation}
		Therefore, (\ref{ode}) has a solution if and only if there exists $X(0)$ satisfying (\ref{BCmodif}). Let us introduce the following map:
		\begin{equation}\label{map}
			\begin{split}
				\mathcal{T}\colon& \mathbb{R}^n \xrightarrow[]{} \mathbb{R}^n\\
				&c \mapsto H_{cl}g_\lambda(c)+B\phi(Kg_\lambda(c))
			\end{split}
		\end{equation}
		Now, we show that this is the case by using Banach fixed point theorem {\cite[page 138]{brezis2010functional}} to $ \mathcal{T} $. In order to show that $ \mathcal{T} $ is a contraction, let us first write that
		{\begin{equation}\label{gminus}
			g_\lambda(c_1)-g_\lambda(c_2)=  e^{\frac{1}{\lambda} \Lambda^{-1}I_{{\rho}}} (c_1 - c_2) \qquad \forall c_1, c_2 \in \mathbb{R}^n
		\end{equation} }
		Since $\phi$ is a 1-Lipschitz continuous function, it follows that for all $ c_1, c_2 \in \mathbb{R}^n $:
		$$
			\abs*{\phi(Kg_\lambda(c_1))-\phi(Kg_\lambda(c_2))}\leq \abs*{K(g_\lambda(c_1)-g_\lambda(c_2))} 
		$$
		which, using (\ref{gminus}), gives: 
		\begin{equation}\label{phii}
				\abs*{\phi(Kg_\lambda(c_1))-\phi(Kg_\lambda(c_2))} \leq  \abs*{K(e^{\frac{1}{\lambda} \Lambda^{-1}I_{{\rho}}} (c_1 - c_2))}
		\end{equation}
		Using (\ref{map}), (\ref{gminus}), and (\ref{phii}), we get:{
		\small\begin{equation*}
			\begin{split}
			&\abs*{\mathcal{T}(c_1)- \mathcal{T}(c_2)}\\&= \abs*{H_{cl}g_\lambda(c_1)+B\phi(Kg_\lambda(c_1))- H_{cl}g_\lambda(c_2)-B\phi(Kg_\lambda(c_2))}\\&\leq
			\abs*{H_{cl}(g_\lambda(c_1)-g_\lambda(c_2))} +\abs*{B\phi((Kg_\lambda(c_1))-\phi(Kg_\lambda(c_2)))}\\ &\leq
			\abs*{ H_{cl}(e^{\frac{1}{\lambda} \Lambda^{-1} I_{{\rho}}}(c_1 - c_2))}+\abs*{BK(e^{\frac{1}{\lambda} \Lambda^{-1} I_{{\rho}}} (c_1 - c_2))}\\
			&\leq \left(\norm {H_{cl}e^{\frac{1}{\lambda} \Lambda^{-1} I_{{\rho}}}}   +\norm{BKe^{\frac{1}{\lambda} \Lambda^{-1} I_{{\rho}}}}\right)\abs*{c_1-c_2}
		\end{split} \end{equation*}}
		
		Let $\alpha= \norm{H_{cl}e^{\frac{1}{\lambda} \Lambda^{-1} I_{{\rho}}}} +\norm{BKe^{\frac{1}{\lambda} \Lambda^{-1} I_{{\rho}}}}$. 
		We have:
		\begin{equation}
			\begin{split}
				\alpha \leq& (\norm{H_{cl}}+\norm{ BK})\norm{e^{\frac{1}{\lambda} \Lambda^{-1} I_{{\rho}}}}\\\leq& e^{\frac{\lambda {\rho}}{\lambda_{\max}(\Lambda)\lambda}}(\norm{H_{cl}} +\norm{BK})
			\end{split}
		\end{equation}
	where {$\lambda_{\max}(\Lambda)$ is the largest eigenvalue of the matrix $\Lambda$.}
	Pick ${\rho}\in \mathbb{R}$ small enough, such that {$e^{\frac{ {\rho}}{\lambda_{\max}(\Lambda)}}(\norm{H_{cl}} +\norm{BK}) < 1$}. Then, we have that $ 0 <\alpha< 1 $. The proof is concluded
	\end{proof}
	
	The main result of this section is presented in the following theorem where we show that the system is well-posed.
	\begin{theorem}
		\label{CrandallLigget}
		For every initial state $X_0\in \mathcal{L}^2(0,1;\mathbb{R}^n), d\in \mathcal{L}^1(\dom d;\mathcal{L}^2(0,1;\mathbb{R}^{q}))$, the closed-loop system (\ref{cls}) admits a unique mild solution pair $(X,d) \in \mathcal{C}(\dom X;\mathcal{L}^2(0, 1;\mathbb{R}^n)) \times \mathcal{L}^1(\dom d;\mathcal{L}^2(0,1;\mathbb{R}^{q}))$ such that {$X(0,z)=X_0$}.
	\end{theorem}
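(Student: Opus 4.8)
The plan is to read Theorem~\ref{CrandallLigget} as a direct application of the Crandall--Liggett generation theorem and its inhomogeneous (mild solution) counterpart, in the nonlinear‑semigroup form of \cite[Chapter~4]{barbu2010nonlinear}. First I would recast the abstract equation (\ref{abstract}) in the canonical form $\dot X + \mathcal{B}X \ni f$ of that framework by setting $\mathcal{B}:=-\mathcal{A}$ and $f:=\mathcal{N}d$, so that the two ingredients needed by the theorem are exactly the two propositions already proved. Proposition~\ref{accretprop} states that $\mathcal{A}+\rho I$ is non-accretive with respect to $\langle\cdot,\cdot\rangle_\mu$; equivalently $-(\mathcal{A}+\rho I)=\mathcal{B}-\rho I$ is accretive, i.e. $\mathcal{B}$ is $\omega$-accretive with $\omega:=-\rho>0$, which is precisely the (quasi-)dissipativity hypothesis and will make the generated semigroup a quasi-contraction. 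Proposition~\ref{rangeprop} supplies the range/surjectivity condition that promotes $\omega$-accretivity to quasi-$m$-accretivity.

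Next I would check the remaining admissibility requirements of the cited theorem. For the forcing term, boundedness of $\mathcal{N}\in\mathcal{L}(\mathcal{L}^2_q,\mathcal{L}^2_n)$ gives $\norm{\mathcal{N}d(\theta)}_{\mathcal{L}^2_n}\le \norm{N}\,\norm{d(\theta)}_{\mathcal{L}^2_q}$, so that $d\in\mathcal{L}^1(\dom d;\mathcal{L}^2_q)$ implies $f=\mathcal{N}d\in\mathcal{L}^1(\dom d;\mathcal{L}^2_n)$, matching the integrability demanded of a mild solution pair. For the initial datum I would verify that $\mathcal{D}(\mathcal{A})$ is dense, so that an arbitrary $X_0\in\mathcal{L}^2_n$ is admissible, i.e. $X_0\in\overline{\mathcal{D}(\mathcal{A})}$. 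This is immediate: every $g\in\mathcal{C}^\infty_c(0,1;\mathbb{R}^n)$ satisfies $g(0)=g(1)=0$ and, since $\phi(0)=0$, also obeys the boundary constraint $g(0)=H_{cl}g(1)+B\phi(Kg(1))$; as $\mathcal{C}^\infty_c(0,1;\mathbb{R}^n)$ is dense in $\mathcal{L}^2_n$, one concludes $\overline{\mathcal{D}(\mathcal{A})}=\mathcal{L}^2_n$.

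With these facts I would invoke the generation and mild-solution results of \cite{barbu2010nonlinear}: Propositions~\ref{accretprop}--\ref{rangeprop} show that $-\mathcal{A}$ is quasi-$m$-accretive, hence $\mathcal{A}$ generates a strongly continuous semigroup of quasi-contractions on $\mathcal{L}^2_n$, and the inhomogeneous theorem then yields, for every $X_0\in\overline{\mathcal{D}(\mathcal{A})}=\mathcal{L}^2_n$ and every $f=\mathcal{N}d\in\mathcal{L}^1(\dom d;\mathcal{L}^2_n)$, a unique $X\in\mathcal{C}(\dom X;\mathcal{L}^2_n)$ obtained as the uniform limit of $\epsilon$-approximate solutions, i.e. a mild solution pair $(X,d)$ in the sense adopted here, with $X(0,z)=X_0$. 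Uniqueness and continuous dependence are inherited from the accretivity estimate, which furnishes the quasi-contraction bound $\norm{X(t)-\widehat{X}(t)}\le e^{\omega t}\norm{X_0-\widehat{X}_0}$ for two solutions driven by the same input.

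I expect the main obstacle to be bookkeeping rather than analysis: carefully matching the concrete data $(\mathcal{A},\rho,\mu)$ to the precise hypotheses of the abstract theorem. In particular, Proposition~\ref{accretprop} establishes non-accretivity with respect to the weighted inner product $\langle\cdot,\cdot\rangle_\mu$ rather than the standard one, so I would invoke its equivalence to the standard norm (the weight $z\mapsto e^{\mu z}$ being bounded above and below on $[0,1]$) to transfer the quasi-contraction property back to the topology of $\mathcal{L}^2_n$ up to a fixed multiplicative constant, leaving the notion of mild solution unchanged. The delicate point is to ensure that the surjectivity statement of Proposition~\ref{rangeprop} is used for the range of $\lambda$ actually required by the generation theorem; once that identification of hypotheses is made, the conclusion follows by direct citation.
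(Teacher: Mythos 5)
Your proposal is correct and follows essentially the same route as the paper: both combine Proposition~\ref{accretprop} (quasi-dissipativity with respect to $\langle\cdot,\cdot\rangle_\mu$) with the range condition of Proposition~\ref{rangeprop} to conclude that $\mathcal{A}+\rho I$ is $m$-non-accretive, pass between the weighted and standard $\mathcal{L}^2$ norms via their equivalence, and then cite the nonlinear semigroup generation theorem of \cite{barbu2010nonlinear} for existence and uniqueness of the mild solution pair. Your added checks (density of $\mathcal{D}(\mathcal{A})$ via $\mathcal{C}^\infty_c$ and $\phi(0)=0$, and integrability of $f=\mathcal{N}d$ from boundedness of $\mathcal{N}$) are details the paper leaves implicit but are consistent with its argument.
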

	\begin{proof}
		The choice of $X_0 \in \mathcal{L}^2_{\mu}(0,1;\mathbb{R}^n)$ is equivalent to $X_0 \in \mathcal{L}^2(0,1;\mathbb{R}^n)$ where $\mathcal{L}^2_{\mu}$ is defined by the norm induced by the scalar product in (\ref{Lmuspace}). By means of Propositions \ref{accretprop} and \ref{rangeprop}, the operator $\mathcal{A}+{\rho} I$ is $m$-non-accretive and thus, the Cauchy problem (\ref{cls}) has a unique mild solution pair, (see \cite[Theorem A.26, page 286]{andreu2004parabolic} and \cite[page 97]{barbu2010nonlinear}).
	\end{proof}
	\begin{definition}
		In \cite[page 127]{barbu2010nonlinear}, a strong solution pair to (\ref{abstract}) is defined as a pair $(X,d)\in (W^{1,1}(\dom X ;\mathcal{L}^2(0,1;\mathbb{R}^n))\cap \mathcal{C}(\dom X;\mathcal{L}^2(0,1;\mathbb{R}^n))) \times \mathcal{L}^1(\dom d; \mathcal{L}^2(0,1;\mathbb{R}^q))$ such that 
		\begin{equation*}
			\begin{split}
				& \frac{dX}{dt}(t)+\mathcal{A} X(t) =\mathcal{N}d(t)  \qquad t\in \dom X,\\& {X(z,0)=X_0}
			\end{split}
		\end{equation*}
		where $ X_0 \in \mathcal{L}^2(0,1;\mathbb{R}^n)$.
	\end{definition}
 In the remaining part of this section, we restrict the focus on the perturbation $d\in\mathcal{L}^2(\dom d;\mathcal{L}^2(0,1;\mathbb{R}^n))$ which is instrumental for the derivation of stability results of Section \ref{stabilityanalysis}. { The following proposition is crucial for the stability analysis Section 4.1, in which the mild solution pair $(X,d)$ can be approximated point-wise via a sequence of strong solution pairs (\ref{abstract}).}
	\begin{proposition} \label{regularity}
		Let $(X,d)$ be a mild solution pair to (\ref{abstract}) and $t\in \dom X$. There exists {a sequence of} strong solution pair $\{(X^k, d^k)\}_{k\in \mathbb{N}}$ such that: 
		\begin{equation}\label{xseq}
			X^k(t) \xrightarrow[k\xrightarrow{}\infty]{\mathcal{L}^2(0,1;\mathbb{R}^n)} X(t)
		\end{equation}
		\begin{equation}\label{dseq}
			d^k \xrightarrow[k\xrightarrow{}\infty]{\mathcal{L}^2(0,t;\mathcal{L}^2(0,1;\mathbb{R}^q))} d
		\end{equation}
	with $\dom X^k= ]0,t]$ and for all $k \in \mathbb{N}$.
	\end{proposition}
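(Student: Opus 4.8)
The plan is to build the approximating sequence by regularizing the data of the Cauchy problem and then transporting both the regularity and the convergence through the nonlinear semigroup generated by $\mathcal{A}+\rho I$. Concretely, I would lean on two standard facts from the theory of $m$-accretive operators (\cite[Chapter 4]{barbu2010nonlinear}), both available here by virtue of Propositions \ref{accretprop} and \ref{rangeprop}: (i) when the initial datum lies in $\mathcal{D}(\mathcal{A})$ and the forcing is $W^{1,1}$ in time, the mild solution pair is in fact a \emph{strong} solution pair; and (ii) mild solutions depend continuously on the data through an inhomogeneous contraction estimate. The proof then reduces to approximating $(X_0,d)$ by such regular data and letting the estimate transfer the convergence to time $t$.

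First, fix $t\in\dom X$ and work on $]0,t]$. Since $\mathcal{C}^1([0,t];\mathcal{L}^2_q)$ is dense in $\mathcal{L}^2(0,t;\mathcal{L}^2_q)$, I would choose $d^k\in W^{1,1}(0,t;\mathcal{L}^2_q)$ with $d^k\to d$ in $\mathcal{L}^2(0,t;\mathcal{L}^2_q)$; this is exactly (\ref{dseq}), and, $[0,t]$ having finite measure, it also gives $d^k\to d$ in $\mathcal{L}^1(0,t;\mathcal{L}^2_q)$. In parallel, because $-(\mathcal{A}+\rho I)$ is $m$-accretive (equivalently, because Theorem \ref{CrandallLigget} produces a mild solution for \emph{every} $X_0\in\mathcal{L}^2_n$), the domain $\mathcal{D}(\mathcal{A})$ is dense in $\mathcal{L}^2_n$, so I would pick $X_0^k\in\mathcal{D}(\mathcal{A})$ with $X_0^k\to X_0$ in $\mathcal{L}^2_n$. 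The only delicate point is that elements of $\mathcal{D}(\mathcal{A})$ must obey the nonlinear boundary constraint $X(0)=H_{cl}X(1)+B\phi(KX(1))$; this costs no density, since any smooth approximant can be corrected near $z=0$ by an $\mathcal{H}^1$ function carrying the boundary mismatch, at a cost vanishing in $\mathcal{L}^2_n$.

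Second, let $(X^k,d^k)$ be the mild solution pair associated with the regularized data $(X_0^k,d^k)$, which exists and is unique by Theorem \ref{CrandallLigget}. Since $X_0^k\in\mathcal{D}(\mathcal{A})$ and $d^k\in W^{1,1}(0,t;\mathcal{L}^2_q)$, the regularity theorem for $m$-accretive generators (\cite[Chapter 4]{barbu2010nonlinear}) upgrades each $(X^k,d^k)$ to a strong solution pair on $]0,t]$, i.e. $X^k\in W^{1,1}(]0,t];\mathcal{L}^2_n)$ satisfying $\dot X^k=\mathcal{A}X^k+\mathcal{N}d^k$ almost everywhere. This places $(X^k,d^k)$ in precisely the strong solution class demanded by the statement, with $\dom X^k=\,]0,t]$.

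Finally, I would compare $(X^k,d^k)$ with $(X,d)$ through the contraction principle. Writing the evolution as $\dot X+(-\mathcal{A})X=\mathcal{N}d$ and recalling that $-(\mathcal{A}+\rho I)$ is accretive (so $-\mathcal{A}$ is $(-\rho)$-accretive), the inhomogeneous version of the estimate yields
\[
\norm{X^k(t)-X(t)}\leq e^{-\rho t}\norm{X_0^k-X_0}+\int_0^t e^{-\rho(t-s)}\norm{\mathcal{N}(d^k(s)-d(s))}\,ds.
\]
Using that $\mathcal{N}$ is bounded, that $e^{-\rho(t-s)}\leq e^{-\rho t}$ on $[0,t]$, and Cauchy--Schwarz, the integral is bounded by $e^{-\rho t}\norm{N}\sqrt{t}\,\norm{d^k-d}_{\mathcal{L}^2(0,t;\mathcal{L}^2_q)}$, so both terms vanish as $k\to\infty$ and (\ref{xseq}) follows. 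I expect the main obstacle to be the second step: matching the hypotheses of the regularity theorem of \cite{barbu2010nonlinear} to the present operator, in particular verifying that the regularized initial data can be taken in $\mathcal{D}(\mathcal{A})$ while respecting the \emph{nonlinear} boundary condition, and that the forced variant of the strong-solution regularity result genuinely applies with the time-regular forcing $d^k$.
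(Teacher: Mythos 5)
Your proposal follows essentially the same route as the paper: approximate $d$ by time-regular functions and $X_0$ by elements of $\mathcal{D}(\mathcal{A})$, invoke the regularity theory for ($\rho$-)non-accretive operators from \cite{barbu2010nonlinear} to obtain strong solution pairs for the regularized data, and transfer the convergence to time $t$ via the inhomogeneous contraction estimate (the paper phrases the integral term with the directional derivative $[\cdot,\cdot]_s$ and bounds it by the norm, which is the same estimate you use). Your explicit attention to the density of $\mathcal{D}(\mathcal{A})$ under the nonlinear boundary constraint and to the $\mathcal{L}^2\Rightarrow\mathcal{L}^1$ passage is a point the paper glosses over, but it does not change the argument.
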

	\begin{proof}
		{Let $(X,d)\in \mathcal{L}^2(0,t,\mathcal{L}^2(0,1; \mathbb{R}^n))\times$ $\mathcal{L}^2(0,t,\mathcal{L}^2(0,1; \mathbb{R}^q))$ be a mild solution pair to (\ref{abstract}). Pick $\{d^k\}_{k\in \mathbb{N}}\subset \mathcal{C}^\infty_c(0,t, \mathcal{ L}^2(0,1;\mathbb{R}^q))$ such that, one has:
		{
		\begin{equation}\label{sequence1}
			d^k \xrightarrow[k\xrightarrow{}\infty]{\mathcal{L}^2(0,t;\mathcal{L}^2(0,1;\mathbb{R}^q)) } d
		\end{equation}}} 
	{
	Since $\mathcal{D(A)}$ is dense in $\mathcal{L}^2(0,1;\mathbb{R}^n)$, then there exists a sequence $\{X^k_0\}_{k\in \mathbb{N}}\subset \mathcal{D(A)}$ such that
		\begin{equation}\label{sequence2}
			X^k_0 \xrightarrow[k\xrightarrow{}\infty]{\mathcal{L}^2(0,1;\mathbb{R}^n)} X_0
		\end{equation}}
		\\
	 We know that a strong solution pair to (\ref{cls}) is also a mild solution pair. Moreover, $\mathcal{A}$ is ${\rho}$-non-accretive, $X_0 \in \overline{\mathcal{D(A)}}$, and since $(X,d), \{(X^k,d^k)\}_{k\in \mathbb{N}}$ are mild solutions to (\ref{cls}). Therefore, from \cite[Theorem 4.1, page 130]{barbu2010nonlinear}, it holds:
		\begin{equation*}
			\begin{aligned}
				\norm{X(t)-X^k(t)}_{{{\mathcal{L}^2_n}}}= e^{{\rho} t}\norm{{X_0-X^k_0}}_{{{\mathcal{L}^2_n}}} \\+ \int^t_0 e^{{\rho} (t-\tau)}[X(\tau)-X^k(\tau), d(\tau)-d^k(\tau)]_s d\tau 
			\end{aligned}
		\end{equation*}
		{where for functions $x, y$ in real Banach spaces, $[\cdot, \cdot]_s$ is the directional derivative of the function $x\xrightarrow{} \norm{x}$ in the direction $y$}, defined by
		$$[x, y]_s:= \lim_{\lambda \xrightarrow[]{} 0} \frac{\norm{x+\lambda y}-\norm{x}}{\lambda}$$
		Using  \cite[Proposition 3.7, (iv)]{barbu2010nonlinear}, one has 
		\begin{equation}\begin{aligned}
				-\norm{d(\tau)-d^k(\tau)}_{{{\mathcal{L}^2_q}}}\leq[X(\tau)-X^k(\tau), d(\tau)-d^k(\tau)]_s\\\leq \norm{d(\tau)-d^k(\tau)}_{{{\mathcal{L}^2_q}}}
			\end{aligned}
		\end{equation}
		Then using the previous statement, one has:
		\begin{equation}\label{key}
			\begin{split}
				\norm{X(t)-X^k(t)}_{{{\mathcal{L}^2_n}}}\leq& e^{{\rho} t}\norm{{X_0-X^k_0}}_{{{\mathcal{L}^2_n}}} \\&+ \int^t_0 e^{{\rho} (t-\tau)} \norm{d(\tau)-d^k(\tau) }_{{{\mathcal{L}^2_q}}}d\tau 
			\end{split}
		\end{equation}
		Since the term $\norm{d(\tau)-d^k(\tau)}_{{{\mathcal{L}^2_q}}}$ is convergent as we can see in (\ref{sequence1}-(\ref{sequence2}), we have:
		\begin{equation}\label{limitconvergence}
			\begin{aligned}
				\lim_{k\xrightarrow[]{}\infty}\int^t_0 e^{{\rho} (t-\tau)} \norm{d(\tau)-d^k(\tau)}_{{{\mathcal{L}^2_q}}} d\tau \\\leq \int^t_0 e^{{\rho} (t-\tau)}\lim_{k\xrightarrow[]{}\infty} \norm{d(\tau)-d^k(\tau)}_{{{{\mathcal{L}^2_q}}}} d\tau
			\end{aligned}
		\end{equation}
		So taking the limit as $k\xrightarrow[]{}\infty$ in (\ref{key}), one has:
		\begin{equation}\label{limitinequality}
			\begin{aligned}
				\lim_{k\xrightarrow[]{}\infty} \norm{X(t)-X^k(t)}_{{{\mathcal{L}^2_n}}}\leq  e^{{\rho} t}\lim_{k\xrightarrow[]{}\infty}\norm{{X_0-X^k_0}}_{{{\mathcal{L}^2_n}}} \\+ \int^t_0 e^{{\rho} (t-\tau)}\lim_{k\xrightarrow[]{}\infty} \norm{d(\tau)-d^k(\tau)}_{{{\mathcal{L}^2_q}}} d\tau 
			\end{aligned}
		\end{equation}
		
		Thus, from  (\ref{sequence1}), (\ref{sequence2}), (\ref{limitconvergence}) and (\ref{limitinequality}), we can {infer} that
		\begin{equation}\label{limitXk}
			\lim_{k\xrightarrow[]{}\infty} X^k(t){\overset{\mathcal{L}^2(0,1;\mathbb{R}^n)}{=}}X(t)
		\end{equation}
		and the proof is concluded.
	\end{proof}
	\section{Stability Analysis and Control Design}\label{stabilityanalysis}
	This section contains results on the $\mathcal{L}^2$-stability analysis to achieve closed-loop exponential stability. This is done first by proposing sufficient conditions and then constructing a Lyapunov functional to derive those sufficient conditions in the form of functional inequalities. 
	\subsection{Sufficient Conditions}
	The following section presents the sufficient conditions for the solution to Problem \ref{solutionproblem} using a dissipation inequality. This is done by proving the following proposition:
	\begin{proposition}
		Assume that there exists a Fréchet differentiable functional $V: \mathcal{L}^2(0,1; \mathbb{R}^n) \xrightarrow[]{} \mathbb{R}_{\geq 0}$ and $c_1, c_2, c_3, \chi \in \mathbb{R}_{>0}$ such that for each $d\in \mathcal{L}^2(0,1;\mathbb{R}^q)$ and $\zeta \in \mathcal{D(A)}$.
		\begin{equation}\label{SC1}
			c_1\norm{\zeta} ^ 2_{{{{{\mathcal{L}^2_n}}}}}\leq V(\zeta)\leq  c_2\norm{\zeta} ^ 2_{{\mathcal{L}^2_n}}
		\end{equation}
		\begin{equation}\label{SC2}
			DV(\zeta)(\mathcal{A}\zeta +\mathcal{N}d) \leq - c_3 V(\zeta) + \chi^2\norm{d}^2_{{{\mathcal{L}^2_q}}}
		\end{equation}
		Let $(X,d)$ be a mild solution pair to (\ref{abstract}). Then, for all $t\in\dom  X$, one has:
		\begin{equation}\label{SC3}
			\begin{split}
				\norm{X(t)}_{{\mathcal{L}^2_n}} \leq &  e^{-\frac{c_3}{2}t}{\left(\frac{c_2}{c_1}\right )}^\frac{1}{2} \norm{{X_0}}_{{\mathcal{L}^2_n}}+\frac{\chi}{\sqrt{c_1}} \sqrt{\int^t_0\norm{d(\theta)}^2_{{\mathcal{L}^2_q}} d\theta}
			\end{split}
		\end{equation}
	\end{proposition}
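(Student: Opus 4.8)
The plan is to reduce the claim about the mild solution pair to an estimate along strong solution pairs, for which the differential inequality (\ref{SC2}) can be integrated by a standard Gr\"onwall argument, and then to pass to the limit using the approximation in Proposition \ref{regularity}. The reason a detour through strong solutions is needed is that (\ref{SC2}) is a statement about the Fréchet derivative of $V$ composed with the dynamics, and only for a strong solution is the path $t \mapsto X(t)$ differentiable in time so that the chain rule $\frac{d}{dt}V(X(t)) = DV(X(t))\dot X(t)$ is meaningful.

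Concretely, I would fix $t \in \dom X$ and invoke Proposition \ref{regularity} to obtain strong solution pairs $\{(X^k,d^k)\}_{k\in\mathbb{N}}$ with $X^k(t) \to X(t)$ in $\mathcal{L}^2_n$, $d^k \to d$ in $\mathcal{L}^2(0,t;\mathcal{L}^2_q)$, and initial data $X^k_0 \to X_0$ in $\mathcal{L}^2_n$. For each $k$, since $X^k \in W^{1,1}(\dom X^k;\mathcal{L}^2_n)$ the map $s \mapsto V(X^k(s))$ is absolutely continuous and, $V$ being Fréchet differentiable, the chain rule gives $\frac{d}{ds}V(X^k(s)) = DV(X^k(s))(\mathcal{A}X^k(s)+\mathcal{N}d^k(s))$ for almost every $s$. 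Applying (\ref{SC2}) then yields the scalar differential inequality $\frac{d}{ds}V(X^k(s)) \leq -c_3 V(X^k(s)) + \chi^2 \norm{d^k(s)}^2_{\mathcal{L}^2_q}$.

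Next I would integrate this inequality with the integrating factor $e^{c_3 s}$ over $[0,t]$, which produces $V(X^k(t)) \leq e^{-c_3 t}V(X^k_0) + \chi^2 \int_0^t e^{-c_3(t-\theta)}\norm{d^k(\theta)}^2_{\mathcal{L}^2_q}\,d\theta$. Using the lower bound of (\ref{SC1}) on the left-hand side, the upper bound of (\ref{SC1}) on $V(X^k_0)$, and the crude estimate $e^{-c_3(t-\theta)} \leq 1$ inside the integral, I obtain $c_1 \norm{X^k(t)}^2_{\mathcal{L}^2_n} \leq c_2 e^{-c_3 t}\norm{X^k_0}^2_{\mathcal{L}^2_n} + \chi^2 \int_0^t \norm{d^k(\theta)}^2_{\mathcal{L}^2_q}\,d\theta$. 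Dividing by $c_1$, taking square roots, and using subadditivity $\sqrt{a+b} \leq \sqrt a + \sqrt b$ gives exactly the bound (\ref{SC3}), but written for $X^k$, $X^k_0$, $d^k$ in place of $X$, $X_0$, $d$.

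It remains to let $k \to \infty$. The left-hand side converges to $\norm{X(t)}_{\mathcal{L}^2_n}$ by continuity of the norm together with $X^k(t) \to X(t)$; the first term on the right converges because $X^k_0 \to X_0$; and the disturbance term converges since $d^k \to d$ in $\mathcal{L}^2(0,t;\mathcal{L}^2_q)$ forces $\int_0^t \norm{d^k(\theta)}^2_{\mathcal{L}^2_q}\,d\theta \to \int_0^t \norm{d(\theta)}^2_{\mathcal{L}^2_q}\,d\theta$. I expect the only genuinely delicate point to be this last passage to the limit: the dissipation estimate itself is an elementary Gr\"onwall computation, whereas all the infinite-dimensional difficulty has already been front-loaded into the existence of strong approximations (Proposition \ref{regularity}) and the well-posedness results (Theorem \ref{CrandallLigget}). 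Verifying that the pointwise-in-time convergence of the states is compatible with the integral disturbance term — which is immediate from the $\mathcal{L}^2$-in-time convergence of $d^k$ — is therefore the crux of closing the argument.
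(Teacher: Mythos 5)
Your proposal is correct and follows essentially the same route as the paper's proof: establish the dissipation estimate along strong solution pairs via the chain rule for $V\circ X$ and the comparison principle, convert it to the norm bound using (\ref{SC1}), the crude bound $e^{-c_3(t-\theta)}\leq 1$, and subadditivity of the square root, and then pass to mild solutions by the strong-solution approximations of Proposition \ref{regularity}. The only cosmetic difference is that the paper frames the first step as observing that mild solutions with data in $\mathcal{D}(\mathcal{A})$ and $d\in\mathcal{L}^1$ are automatically strong (citing Barbu), whereas you invoke the approximating sequence from the outset; the substance is identical.
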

		\begin{proof}
			First we show that the above results hold for all strong solution pairs to (\ref{abstract}). More precisely, we consider solution pair $\dom X\ni t \mapsto (X(t),d(t))$ to (\ref{abstract}) and assume that ${X(0)} \in \mathcal{D(A)}, d\in \mathcal{L}^1(\dom d; \mathcal{L}^2(0,1; \mathbb{R}^q))$. Then, since, as shown in the proof of Proposition \ref{accretprop}, $\mathcal{A}$ is $\rho$-non-accretive, one has that $(X,d)$ is a {strong} solution pair (this is proved in \cite{barbu2010nonlinear}, Theorem 4.14).
			More precisely, one has that $X\in \mathcal{C}^1(\dom X, \mathcal{L}^2(0,1;\mathbb{R}^n))$, and for all $t\in \dom X$:
			\begin{equation}\label{abstract2}
				\dot{X}(t)= \mathcal{A} X(t) + \mathcal{N} d(t)
			\end{equation}
			where $X(t) \in \mathcal{D(A)}$.
			Now, consider the following function:
			\begin{equation}
				\begin{array}{l}
					\mathcal{W}: \dom X \xrightarrow[]{} \mathbb{R}\\
					\qquad t\mapsto (V \circ X)(t)
				\end{array}
			\end{equation}
			Then, since { $V: \mathcal{L}^2(0,1;\mathbb{R}^n) \xrightarrow[]{}\mathbb{R}_{\geq 0}$} is Fréchet differentiable everywhere and $X: \dom X\xrightarrow[]{} \mathcal{L}^2(0,1;\mathbb{R}^n)$ is differentiable almost everywhere, it follows that for almost all $t\in \dom{X}$:
			$$\dot{\mathcal{W}}(t)= DV(X)\dot {X}(t)$$
			which thanks to (\ref{abstract2}) yields for almost all $t\geq 0$
			$$ {\dot{\mathcal{W}}}(t)= DV(X)(\mathcal{A} X(t) + \mathcal{N} d(t))$$
			Thus, using (\ref{SC2}) one gets for almost all $t\in \dom X$
			$$\dot {\mathcal{W}}(t)\leq -c_3\mathcal{W}(t) + \chi^2\norm{d(t)}^2_{{{\mathcal{L}^2_q}}}$$
			Therefore, since $\mathcal{W}$ is continuous on $\dom X$, {from the comparison principle (\cite[Page 102]{8}) we have that: } we have:
			$$\mathcal{W}(t)\leq e^{-c_3t}\mathcal{W}(0)+ \chi^2\int^t_0 e^{-c_3(t-\theta)} \norm{{{d(\theta)}}}^2_{{{\mathcal{L}^2_q}}} d\theta$$
			At this stage, notice that for all $t \in \dom X$, one has:
			$$\int^t_0 e^{-c_3(t-\theta)} \norm{{d(\theta)}}^2_{{{\mathcal{L}^2_q}}} d\theta \leq \int^t_0 \norm{d(\theta)}^2_{{{\mathcal{L}^2_q}}} d\theta$$
			which allows one to conclude that for all $t \in \dom X$
			$$\mathcal{W}(t)\leq e^{-c_3t}\mathcal{W}(0)+ \chi^2\int^t_0  \norm{d(\theta)}^2_{{{\mathcal{L}^2_q}}} d\theta$$
			Finally by using (\ref{SC1}), it follows that for almost all $t\in \dom X$ 
			\begin{equation}\label{SC4}
				\norm{X(t)}_{{{\mathcal{L}^2_n}}} \leq  e^{-\frac{c_3}{2}t}\sqrt{\frac{c_2}{c_1}} \norm{{X_0}}_{{{\mathcal{L}^2_n}}}+\frac{\chi}{\sqrt{c_1}} \sqrt{\int^t_0\norm{d(\theta)}^2_{{{\mathcal{L}^2_q}}} d\theta}
			\end{equation}
			Now we conclude the proof by showing that the above bound holds also for mild solution pairs to (\ref{abstract}). Let $(X,d)$ be any solution pair. By applying Proposition 3, there exists a sequence of strong solution pairs $\{(X^k, d^k)\}_{k\in \mathbb{N}}$ such that (\ref{xseq}) and (\ref{dseq}) hold. Then,  for all $ k\in \mathbb{N}$, thanks to (\ref{SC4}), one has for all $t\in \dom X$
			$$\norm{X^k(t)}_{{{\mathcal{L}^2_n}}} \leq  e^{-\frac{c_3}{2}t}\sqrt{\frac{c_2}{c_1}} \norm{X^k_0}_{{{\mathcal{L}^2_n}}}+\frac{\chi}{\sqrt{c_1}} \sqrt{\int^t_0\norm{d^k(\theta)}^2_{{{\mathcal{L}^2_q}}}} d\theta$$
			Taking the limit for $k\xrightarrow[]{} \infty$, due to (\ref{xseq})-(\ref{dseq}), one has for all $t\in \dom X$
			$$\norm{X(t)}_{{{\mathcal{L}^2_n}}} \leq  e^{-\frac{c_3}{2}t}\sqrt{\frac{c_2}{c_1}} \norm{{X_0}}_{{{\mathcal{L}^2_n}}}+\frac{\chi}{\sqrt{c_1}} \sqrt{\int^t_0\norm{d(\theta)}^2_{{{\mathcal{L}^2_q}}} d\theta}$$
			This concludes the proof.
		\end{proof}
	\begin{remark}
		Proposition 4 provides sufficient conditions for input-to-state stability for the closed-loop system in the form of a functional inequality. This provides an elegant generalization to abstract dynamical systems of the well-known ISS dissipation inequality for finite-dimensional nonlinear systems; see, e.g \cite{sontag2008input}. It is interesting to observe that the gradient of $V$ is replaced in (\ref{SC2}) by the
		Fr\'echet derivative. 
	\end{remark}
	\subsection{Quadratic Conditions}
	Let us define the following global sector condition which will be useful in the upcoming Lyapunov analysis computations. 
	\begin{lemma} \cite[page 41]{2}
		For all $\nu \in \mathbb{R}^m$, the nonlinearity $\phi (\nu)$ satisfies the following inequality:
		\begin{equation}\label{gsc}
			\phi(\nu)^\top T(\phi(\nu) + \nu) \leq 0
		\end{equation} for any diagonal matrix T $\in \mathbb{D}^m_p$.
	\end{lemma}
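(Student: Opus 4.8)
The plan is to exploit the fact that both the nonlinearity $\phi$ and the weight $T$ act in a decentralized fashion, so that the quadratic form splits into a sum of scalar contributions that can be handled independently. Writing $T=\diag(T_{11},\dots,T_{mm})$ with each $T_{ii}>0$, and recalling that the $i$-th component $\phi(\nu)_i=\phi(\nu_i)$ depends only on $\nu_i$, I would first expand
\begin{equation*}
	\phi(\nu)^\top T(\phi(\nu)+\nu)=\sum_{i=1}^m T_{ii}\,\phi(\nu_i)\bigl(\phi(\nu_i)+\nu_i\bigr).
\end{equation*}
Since every $T_{ii}$ is strictly positive, it suffices to establish the scalar inequality $\phi(\nu_i)(\phi(\nu_i)+\nu_i)\leq 0$ for each $i$; the global bound then follows by summing nonpositive terms weighted by positive coefficients.

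For the scalar step I would use the identity $\phi(\nu_i)+\nu_i=\sigma(\nu_i)$, which is immediate from the definition $\phi(\nu_i)=\sigma(\nu_i)-\nu_i$. The claim thus reduces to showing $\phi(\nu_i)\,\sigma(\nu_i)\leq 0$, i.e. that the dead-zone and saturation components never carry the same strict sign. I would verify this by a case analysis on the three regions determined by the saturation level $\overline{u}_i>0$: (i) if $\abs{\nu_i}\leq \overline{u}_i$ then $\sigma(\nu_i)=\nu_i$, hence $\phi(\nu_i)=0$ and the product vanishes; (ii) if $\nu_i>\overline{u}_i$ then $\sigma(\nu_i)=\overline{u}_i>0$ while $\phi(\nu_i)=\overline{u}_i-\nu_i<0$; (iii) if $\nu_i<-\overline{u}_i$ then $\sigma(\nu_i)=-\overline{u}_i<0$ while $\phi(\nu_i)=-\overline{u}_i-\nu_i>0$. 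In the two saturated cases the factors carry opposite signs, so the product is strictly negative, while in the unsaturated case it is zero; in all cases $\phi(\nu_i)\sigma(\nu_i)\leq 0$.

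I do not expect a genuine obstacle here: the only point requiring care is the sign bookkeeping in cases (ii) and (iii), together with the observation that it is precisely the diagonal (decentralized) structure of $T$ that legitimizes the term-by-term argument—an off-diagonal weight would produce cross terms of the form $\phi(\nu_i)\sigma(\nu_j)$, whose sign the componentwise reasoning cannot control. Collecting the nonpositive scalar terms with the positive weights $T_{ii}$ then yields $\phi(\nu)^\top T(\phi(\nu)+\nu)\leq 0$, which completes the proof.
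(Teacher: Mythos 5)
Your proof is correct: the reduction $\phi(\nu_i)+\nu_i=\sigma(\nu_i)$, the componentwise split enabled by the diagonal structure of $T$, and the three-region sign analysis are exactly the standard argument for the global sector condition. The paper itself gives no proof of this lemma---it is stated with a citation to \cite[page 41]{2}---so your self-contained derivation fills in precisely the argument that reference contains, and your remark on why diagonality of $T$ is essential is a worthwhile addition.
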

	The following theorem provides sufficient conditions in the form of matrix inequalities under which Problem \ref{solutionproblem} admits a feasible solution. 
	\begin{theorem}\label{lyapth}
		If there exist $ P \in \mathbb{D}_p^n$, $T \in \mathbb{D}_p^m$, $\mu, \chi, {\alpha} \in \mathbb{R}_{>0}$, and $\Gamma\in \mathbb{S}^n_p$ such that the following hold:
		\begin{equation}\label{lmi1}
		\begin{pmatrix} H_{cl}^\top P\Lambda H_{cl}- e^{-\mu}P\Lambda  &  H_{cl}^\top P\Lambda B-K^\top T\\
				* & B^\top P\Lambda B-2T \end{pmatrix}\leq 0
		\end{equation}
		\begin{equation}\label{lmi2}
			\begin{pmatrix} \Gamma & PN \\ * &\chi^2 I \end{pmatrix}\geq 0
		\end{equation}
		\begin{equation}\label{lmi3}
			P(\alpha I-\mu \Lambda)+\Gamma \leq 0
		\end{equation}
		Then, $K$ solves Problem \ref{solutionproblem} and in particular (\ref{lmiprob}) holds with
		\begin{equation}\label{lyapcoefficients}
			\begin{array}{lc}
				\omega = \frac{\alpha}{2}, & \kappa = \sqrt{\frac{\lambda_{\max}(P)}{\lambda_{\min}(P)}} e^{\frac{\mu}{2}} \\ \gamma= \frac{\chi}{\lambda_{\min}(P)} e^{\frac{\mu}{2}}
				& 
			\end{array}
		\end{equation}
	\end{theorem}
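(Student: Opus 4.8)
The plan is to exhibit a Lyapunov functional $V$ meeting the hypotheses (\ref{SC1})--(\ref{SC2}) of Proposition 4 and then invoke that proposition. Guided by the weighted inner product (\ref{Lmuspace}) and by the appearance of $P\Lambda$ and $e^{-\mu}$ in (\ref{lmi1}), I would take
\[
V(X) := \int_0^1 e^{-\mu z}\, X(z)^\top P\, X(z)\, dz,
\]
which is Fréchet differentiable on $\mathcal{L}^2_n$ with $DV(\zeta)(h) = 2\int_0^1 e^{-\mu z}\zeta^\top P h\, dz$. Since $e^{-\mu z}\in[e^{-\mu},1]$ on $[0,1]$ and $\lambda_{\min}(P)\abs{\zeta(z)}^2 \le \zeta(z)^\top P\zeta(z)\le \lambda_{\max}(P)\abs{\zeta(z)}^2$, the two-sided bound (\ref{SC1}) holds with $c_1 = e^{-\mu}\lambda_{\min}(P)$ and $c_2 = \lambda_{\max}(P)$; these are exactly the constants that reproduce $\kappa$ in (\ref{lyapcoefficients}).

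The core computation is the dissipation inequality (\ref{SC2}), which need only be checked for $\zeta\in\mathcal{D}(\mathcal{A})$, precisely where the boundary condition is available. For such $\zeta$ I would write $DV(\zeta)(\mathcal{A}\zeta+\mathcal{N}d) = -2\int_0^1 e^{-\mu z}\zeta^\top P\Lambda \zeta_z\, dz + 2\int_0^1 e^{-\mu z}\zeta^\top PN d\, dz$ and integrate the first term by parts (using that $P\Lambda$ is symmetric, both factors being diagonal). This produces the boundary contribution $\zeta(0)^\top P\Lambda\zeta(0) - e^{-\mu}\zeta(1)^\top P\Lambda\zeta(1)$ together with the volume term $-\mu\int_0^1 e^{-\mu z}\zeta^\top P\Lambda\zeta\, dz$. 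Substituting $\zeta(0)=H_{cl}\zeta(1)+B\phi(K\zeta(1))$ and stacking $\mathcal{X}:=(\zeta(1)^\top,\ \phi(K\zeta(1))^\top)^\top$, the boundary contribution becomes a quadratic form $\mathcal{X}^\top M \mathcal{X}$ whose matrix $M$ is the one in (\ref{lmi1}) before the sector correction.

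To absorb the dead-zone nonlinearity I would invoke the global sector condition of the preceding lemma with $\nu = K\zeta(1)$, that is $\phi^\top T(\phi + K\zeta(1))\le 0$, which reads $\mathcal{X}^\top \left(\begin{smallmatrix} 0 & K^\top T \\ TK & 2T \end{smallmatrix}\right)\mathcal{X}\le 0$. Subtracting this nonpositive term shows the boundary contribution is bounded above by $\mathcal{X}^\top\Pi\mathcal{X}$, with $\Pi$ the matrix of (\ref{lmi1}), and hence is $\le 0$ whenever (\ref{lmi1}) holds. What remains is $-\mu\int_0^1 e^{-\mu z}\zeta^\top P\Lambda\zeta\, dz + 2\int_0^1 e^{-\mu z}\zeta^\top PN d\, dz$; adding $\alpha V(\zeta)$ collects the state terms into $\int_0^1 e^{-\mu z}\zeta^\top P(\alpha I-\mu\Lambda)\zeta\, dz$, where (\ref{lmi3}) enters to give $\zeta^\top P(\alpha I - \mu\Lambda)\zeta\le -\zeta^\top\Gamma\zeta$ pointwise in $z$.

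The final and most delicate step is the disturbance term, because (\ref{SC2}) requires the unweighted quantity $\chi^2\norm{d}^2_{\mathcal{L}^2_q}$ whereas the computation carries the factor $e^{-\mu z}$. I would read (\ref{lmi2}) through a Schur/congruence argument as the pointwise bound $-\zeta^\top\Gamma\zeta + 2\zeta^\top PN d \le \chi^2 d^\top d$, and then exploit $0 < e^{-\mu z}\le 1$ by a short case split: if the left-hand side is nonnegative, multiplying by $e^{-\mu z}\le 1$ only decreases it; if it is negative, it is already dominated by $\chi^2 d^\top d\ge 0$. Integrating in $z$ yields $DV(\zeta)(\mathcal{A}\zeta+\mathcal{N}d)\le -\alpha V(\zeta) + \chi^2\norm{d}^2_{\mathcal{L}^2_q}$, i.e. (\ref{SC2}) with $c_3=\alpha$. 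Proposition 4 then delivers (\ref{lmiprob}) with $\omega=\alpha/2$, $\kappa=\sqrt{c_2/c_1}=\sqrt{\lambda_{\max}(P)/\lambda_{\min}(P)}\,e^{\mu/2}$ and $\gamma$ as in (\ref{lyapcoefficients}). I expect the main obstacle to be exactly this interplay of the three matrix inequalities with the exponential weight --- in particular justifying that the weight may be dropped on the disturbance cross-term --- rather than the algebra of the boundary quadratic form, which is routine once the sector condition is inserted.
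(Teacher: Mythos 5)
Your proposal is correct and follows essentially the same route as the paper: the weighted functional $V(X)=\int_0^1 e^{-\mu z}X^\top PX\,dz$, integration by parts with the boundary condition to form the quadratic form in $\mathcal{X}$, the global sector condition to reach the matrix of (\ref{lmi1}), and the pointwise use of (\ref{lmi2})--(\ref{lmi3}) under the weight (your case split for dropping $e^{-\mu z}$ on the disturbance term is just a slightly more explicit version of the paper's step), before invoking Proposition 4 with $c_1=e^{-\mu}\lambda_{\min}(P)$, $c_2=\lambda_{\max}(P)$, $c_3=\alpha$.
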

	\begin{proof}
		Similarly as in \cite{1}, consider the following Lyapunov functional
		\begin{equation}
			\begin{split}
				V:& \mathcal{L}^2(0,1; \mathbb{R}^n) \xrightarrow[]{} \mathbb{R} \\&
				X\mapsto \int^1_0 e^{-\mu z}\langle X(z), PX(z)\rangle_{\mathbb{R}^n} dz
			\end{split}
		\end{equation}
		{ with the same $\mu$ defined in (\ref{Lmuspace})} and observe that for each $X\in \mathcal{L}^2(0,1;\mathbb{R}^n)$, one has 
		\begin{equation}\label{Vineq}
			c_1\norm{X}^2_{{{\mathcal{L}^2_n}}}\leq V(X)\leq c_2 \norm{X}^2_{{{\mathcal{L}^2_n}}}    
		\end{equation}
		where $c_1:= e^{-\mu}\lambda_{\min}(P)$ and $c_2:= \lambda_{\max}(P)$ are strictly positive.
		 As done in \cite{ferrante2019boundary},  for each $X\in \mathcal{D(A)}$, $d\in \mathcal{L}^2(0,1;\mathbb{R}^q)$ one has 
	\begin{equation*}
		\begin{split}
			DV(X)(\mathcal{A}X+\mathcal{N}d)= \int^1_0 e^{-\mu z}  (& -2X_z(z)^\top \Lambda P X(z)\\&+ 2d(z)^\top N^\top P X(z) ) dz
		\end{split}
	\end{equation*}
		Since $P, \Lambda \in \mathbb{D}^n_p$, one has that 
		\begin{equation*}\begin{split}
			&\int^1_0 -2e^{-\mu z} X_z(z)^\top \Lambda P X(z)dz\\&= -\int^1_0 e^{-\mu z}\frac{d}{dz}\left(X(z)^\top P \Lambda X(z)\right) dz
		\end{split}\end{equation*}
		Using integration by parts, the following holds
	{\begin{equation*}\begin{split}
			&DV(X)(\mathcal{A}X+\mathcal{N}d)= -e ^{-\mu z}X(z)^\top P \Lambda X(z)\Bigr|^1_0 \\&-\mu \int^1_0 e ^{-\mu z}X(z)^\top P \Lambda X(z) dz +\int^1_0 2 e ^{-\mu z} d(z)^\top N^\top P X(z) dz
		\end{split}\end{equation*}}
	Since $X\in \mathcal{D(A)}$, one gets 
		\begin{equation*}
		\begin{split}
		&	DV(X)(\mathcal{A}X+\mathcal{N}d)= \\
			&\mathcal{X}^\top
			\begin{pmatrix}
				H_{cl}^\top P\Lambda H_{cl}- e^{-\mu}P\Lambda  &  H_{cl}^\top P\Lambda B\\
				* & B^\top P\Lambda B 
			\end{pmatrix}
			\mathcal{X}\\&
			+\int^1_0 e^{-\mu z}\ \begin{pmatrix}X(z)\\d(z)
			\end{pmatrix}^\top \begin{pmatrix}
				-\mu P \Lambda  & PN \\
				* & 0 
			\end{pmatrix}\begin{pmatrix}X(z)\\d(z)
			\end{pmatrix} dz
		\end{split}\end{equation*}
			where $\mathcal{X}\coloneqq \begin{pmatrix}
			X(1) \\
			\phi(KX(1)) 
		\end{pmatrix}$. Similarly as in \cite{shreim2020design}, after we introduce the global sector condition found in (\ref{gsc}) one has 
		\begin{equation*}
		\begin{split}
			&DV(X)(\mathcal{A}X+\mathcal{N}d)\leq\\&
		\mathcal{X}^\top 
			\begin{pmatrix}
				H_{cl}^\top P\Lambda	H_{cl}- e^{-\mu}P\Lambda  &  	H_{cl}^\top P\Lambda B-K^\top T\\
				* & B^\top P\Lambda B-2T 
			\end{pmatrix}
			\mathcal{X}\\&
			+\int^1_0 e^{-\mu z}\begin{pmatrix}X(z)\\d(z)
			\end{pmatrix}^\top \begin{pmatrix}
				-\mu P \Lambda  & PN \\
				* & 0 
			\end{pmatrix}\begin{pmatrix}X(z)\\d(z)
			\end{pmatrix} dz
		\end{split}	
	\end{equation*}
		where $T\in \mathbb{D}^m_p$. From (\ref{lmi2}) one has 
		\begin{equation*}\begin{split}
			&DV(X)(\mathcal{A}X+\mathcal{N}d)\leq \\
			&\mathcal{X}^\top 
			\begin{pmatrix}        	H_{cl}^\top P\Lambda 	H_{cl}- e^{-\mu}P\Lambda  &  	H_{cl}^\top P\Lambda B-K^\top T\\
				* & B^\top P\Lambda B-2T 
			\end{pmatrix}
		\mathcal{X}\\&
			+\int^1_0 e^{-\mu z}\begin{pmatrix}X(z)\\d(z)
			\end{pmatrix}^\top \begin{pmatrix}
				-\mu P \Lambda + \Gamma & 0 \\
				* & \chi^2 I 
			\end{pmatrix}\begin{pmatrix}X(z)\\d(z)
			\end{pmatrix}dz
		\end{split}\end{equation*}
		Finally, using (\ref{lmi1}) and (\ref{lmi3}) we have
		\begin{equation}
			DV(X)(\mathcal{A}X+\mathcal{N}d)\leq -\alpha V(X) + \chi^2\norm{d}^2_{{{\mathcal{L}^2_q}}}
		\end{equation}
		which reads as (\ref{SC2}). Hence, the proof is concluded.\end{proof}
		{\begin{remark}
		    We emphasize that the result presented in Theorem \ref{lyapth} is global. However, the feasibility of (\ref{SC1}) is not always guaranteed. This is commonly seen in the literature of saturated control (see \cite[Chapter3]{2}).
		\end{remark}}
	\subsection{Control Design}\label{Controldesign}
	
	Theorem \ref{lyapth} enables to recast the solution to Problem \ref{solutionproblem} as the feasibility problem of some matrix inequalities, i.e. (\ref{lmi1})-(\ref{lmi2})-(\ref{lmi3}). However, those conditions are nonlinear in the variables $P, K, \chi, \mu$ and $\alpha$. As such, Theorem \ref{lyapth} cannot be used directly to get a numerically tractable solution to Problem \ref{solutionproblem}. The result given next, provides sufficient conditions in a form that is more advantageous from a numerical standpoint. 
	\begin{corollary}\label{Schur}
		If there exist $Q\in \mathbb{D}^{n}_p, S\in \mathbb{D}^m_p, \widehat{\Gamma} \in \mathbb{S}^{n}_p$, $\mu, \alpha \in \mathbb{R}_{>0}$ and $W\in \mathbb{R}^{m \times n}$ such that 
		\begin{equation}\label{lmi4}
			\begin{pmatrix} -Q\Lambda^{-1} & HQ + BW & BS\\ * & -e^{-\mu} \Lambda Q & -W^\top \\ * & * & -2S\end{pmatrix} \leq 0
		\end{equation}
		\begin{equation}\label{lmi5}
			\begin{pmatrix} \widehat{\Gamma} & N \\
				* & I \end{pmatrix} \geq 0
		\end{equation}
		\begin{equation}\label{lmi6}
			Q(\alpha I - \mu\Lambda) + \widehat{\Gamma}\leq 0
		\end{equation}
		Then, $K=WQ^{-1}$ solves Problem \ref{solutionproblem}. In particular, (\ref{lmiprob}) holds with $\omega$ and $\kappa$ defined as in (\ref{lyapcoefficients}) and 
		\begin{equation}\label{gammacond}
			\gamma = \sqrt{\lambda_{\max}(Q)} e^{\frac{\mu}{2}}
		\end{equation}
	\end{corollary}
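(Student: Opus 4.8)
The plan is to recognize that conditions (\ref{lmi4})--(\ref{lmi6}) are precisely the conditions (\ref{lmi1})--(\ref{lmi3}) of Theorem~\ref{lyapth} rewritten under an invertible change of decision variables, so that the conclusion follows by a direct application of that theorem. I would introduce the substitution $P = Q^{-1}$, $T = S^{-1}$, $\Gamma = P\widehat{\Gamma}P$, $W = KQ$ (equivalently $K = WQ^{-1}$), and fix $\chi = 1$. Because $Q\in\mathbb{D}^n_p$ and $S\in\mathbb{D}^m_p$, the inverses $P$ and $T$ are again diagonal and positive definite, while $\Gamma$ is symmetric positive definite as a congruence of $\widehat{\Gamma}\in\mathbb{S}^n_p$; hence the new variables $(P,T,\Gamma,\mu,\chi,\alpha)$ belong to the sets required by Theorem~\ref{lyapth}. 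The observation that makes every manipulation go through is that $P,Q,T,S$ and $\Lambda$ are diagonal, hence pairwise commuting, so that $P\Lambda=\Lambda P>0$ and $QP=PQ=I$.

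The heart of the proof is to show that (\ref{lmi4}) is equivalent to (\ref{lmi1}). I would first apply a congruence transformation with $\diag(I,P,T)$ to (\ref{lmi4}); using $QP=I$, $ST=I$, $W=KQ$ and $H_{cl}=H+BK$, this turns (\ref{lmi4}) into
\begin{equation*}
\begin{pmatrix} -(P\Lambda)^{-1} & H_{cl} & B \\ * & -e^{-\mu}P\Lambda & -K^\top T \\ * & * & -2T \end{pmatrix}\leq 0.
\end{equation*}
Since the $(1,1)$ block $-(P\Lambda)^{-1}$ is negative definite, the Schur-complement lemma \cite{zhang2006schur} shows that this is equivalent to eliminating the first block row and column, which yields
\begin{equation*}
\begin{pmatrix} -e^{-\mu}P\Lambda & -K^\top T \\ * & -2T \end{pmatrix} + \begin{pmatrix} H_{cl}^\top \\ B^\top \end{pmatrix} P\Lambda \begin{pmatrix} H_{cl} & B \end{pmatrix}\leq 0,
\end{equation*}
and expanding the rank-updated term reproduces exactly (\ref{lmi1}). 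I expect this to be the main obstacle: one must recognize the quadratic pattern $\begin{pmatrix} H_{cl} & B\end{pmatrix}^\top P\Lambda\begin{pmatrix} H_{cl} & B\end{pmatrix}$ concealed in (\ref{lmi1}) and lift it through an auxiliary $n$-dimensional block so that the Schur complement becomes applicable; the remaining algebra is routine diagonal bookkeeping.

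The other two inequalities are straightforward. A congruence of (\ref{lmi5}) with $\diag(P,I)$ gives $\begin{pmatrix} P\widehat{\Gamma}P & PN \\ * & I\end{pmatrix}\geq 0$, which is (\ref{lmi2}) with $\Gamma=P\widehat{\Gamma}P$ and $\chi=1$; and a congruence of (\ref{lmi6}) with $P$, together with $QP=I$ and the commutativity of the diagonal factors, gives $P(\alpha I-\mu\Lambda)+\Gamma\leq 0$, which is (\ref{lmi3}). Consequently all hypotheses of Theorem~\ref{lyapth} hold for $(P,T,\Gamma,\mu,\chi,\alpha)$, so $K=WQ^{-1}$ solves Problem~\ref{solutionproblem}. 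It then remains to translate the constants: since $\lambda_{\max}(P)/\lambda_{\min}(P)=\lambda_{\max}(Q)/\lambda_{\min}(Q)$, the values $\omega=\alpha/2$ and $\kappa$ coincide with those in (\ref{lyapcoefficients}); and substituting $\chi=1$ and $\lambda_{\min}(P)=1/\lambda_{\max}(Q)$ into the ISS-gain coefficient $\chi/\sqrt{c_1}$ from (\ref{SC3}), where $c_1=e^{-\mu}\lambda_{\min}(P)$, reproduces (\ref{gammacond}) and establishes (\ref{lmiprob}), concluding the proof.
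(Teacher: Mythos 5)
Your proposal is correct and follows essentially the same route as the paper: the change of variables $P=Q^{-1}$, $T=S^{-1}$, $W=KQ$, $\Gamma=P\widehat{\Gamma}P$, $\chi=1$, the Schur-complement/congruence link between (\ref{lmi4}) and (\ref{lmi1}), and the congruences relating (\ref{lmi5})--(\ref{lmi6}) to (\ref{lmi2})--(\ref{lmi3}), followed by an appeal to Theorem~\ref{lyapth}. The only (harmless) differences are that you argue in the forward direction from (\ref{lmi4}) to (\ref{lmi1}) and verify the last two equivalences directly instead of citing \cite[Corollary 1]{ferrante2019boundary}, which makes the argument slightly more self-contained.
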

	\begin{proof}
		Applying the Schur complement lemma to (\ref{lmi1}) gives
		\begin{equation*}
			\begin{pmatrix}
				-\Lambda^{-1} P & PH +PBK & PB\\
				* & -e^{-\mu} P\Lambda & -K^\top T\\
				* & * & -2T
			\end{pmatrix}\leq 0
		\end{equation*}
	which is equivalent to
	\begin{equation*}
		C^\top \begin{pmatrix}
				-\Lambda^{-1} P & PH +PBK & PB\\
			* & -e^{-\mu} P\Lambda & -K^\top T\\
			* & * & -2T
		\end{pmatrix}C
	\end{equation*}
where $C= \begin{pmatrix}P^{-1} & 0 & 0\\
*& P^{-1}& 0\\
* & * & T^{-1}\end{pmatrix}$
which gives
\begin{equation*}
	\begin{pmatrix}
		-P^{-1}\Lambda^{-1} & HP^{-1}+ BKP^{-1} & BT^{-1}\\ * & -e^{-\mu}\Lambda P^{-1} & -P^{-1} K^\top\\ * & * & -2T^{-1}
	\end{pmatrix} \leq 0
\end{equation*}
Then, by setting $P^{-1}= Q$, $T^{-1}=S$ and $W= KP^{-1}$, we have that the previous inequality is equivalent to (\ref{lmi4}).
		In \cite[Corollary 1]{ferrante2019boundary}, it is shown that (\ref{lmi2}) and (\ref{lmi3}) are respectively equivalent to the linear inequalities (\ref{lmi5}) and (\ref{lmi6}) with $P^{-1}= Q$, $\widehat{\Gamma}= Q\Gamma Q$, and $\chi=1$.
	\end{proof}
	\begin{remark}
		It can be easily shown that the conditions in Corollary \ref{Schur} are actually equivalent to those in Theorem \ref{lyapth} in terms of feasibility. As such, Corollary \ref{Schur} does not introduce any additional conservatism. 
	\end{remark}
	In the formulation of Problem \ref{solutionproblem}, no specific requirements on the scalar $\gamma$ are considered. On the other hand, it is obvious that to minimize the effect of the exogenous input $d$ on the closed-loop system, the controller gain $K$ should be designed so that (\ref{lmiprob}) holds with a minimal $\gamma$. This goal can be achieved by considering the following optimization problem
	\begin{equation}\label{optimprob}
		\begin{array}{c}
			\underset{Q,W,\mu, \alpha, c}{\inf} c\\
			\text{s.t: } (\ref{lmi4}), (\ref{lmi5}), (\ref{lmi6}), Q\in \mathbb{D}^{n_p}_p, \mu >0, \alpha>0, Q-cI\leq 0. 
		\end{array}
	\end{equation}
	{It can be seen through equation (\ref{gammacond}) that the variable $\gamma$ is directly proportional to the square root of the maximum eigen value of matrix $Q$. Therefore, minimizing $c$ is equivalent to minimizing $\lambda_{\max (Q)}$.} One can note that (\ref{lmi4}), (\ref{lmi5}) and (\ref{lmi6}) are nonlinear in the decision variables $\mu$ and $\alpha$. In fact, we select the scalars $\mu$ and $\alpha$ via a grid search. {In other words,we set $\mu$ and $\alpha$ as arrays of appropriate values and resolution, and then running the code to generate a 3-dimensional diagram representing the feasible regions of the solution with respect to the pair $(\mu, \alpha)$.}

	\section{Numerical Example}\label{numerical}
	To solve initial-boundary value problem for (\ref{cls}), numerical integration of hyperbolic PDEs is performed via the use of the Lax-Friedrichs (Shampine’s two-step
variant) scheme implemented in \textsc{Matlab}
 by Shampine \cite{12}. YALMIP package in \textsc{Matlab} is used to solve the LMIs \cite{lofberg2004yalmip}.  Consider the example presented in \cite{shreim2020design} modified to account for the presence of in-domain disturbances. Specifically, we consider the following system for all $(t,z)\in \mathbb{R}_{\geq 0}\times (0,1)$:
	\begin{equation*}\begin{split}
		&X_t(t,z) + \begin{pmatrix}1 & 0\\0 & \sqrt{2}\end{pmatrix} X_z(t,z) = \begin{pmatrix} 1\\1\end{pmatrix} d(t,z)\\
		&X(t,0)= \begin{pmatrix}0.25 & 0\\-1 & 0.25 \end{pmatrix}X(t,1) + \begin{pmatrix}1 & 0\\0 & 1\end{pmatrix} u(t) \qquad \forall t \in \mathbb{R}_{\geq 0}
	\end{split}
	\end{equation*}
	We consider the solution to Problem \ref{solutionproblem} obtained by solving (\ref{optimprob}), via a line search on the scalars $\alpha$ and $\mu$. Fig \ref{bidimension} represents the set of feasible values of (\ref{optimprob}) of the pair $(\mu,\alpha)$. As in \cite{ferrante2019boundary}, we have that the feasible values of $\mu$ decreases as $\alpha$ increases. {Then, as seen in the figure, we choose $\mu=1$, $\alpha= 0.5$ in order to guarantee a feasible solution to our problem.}
	\begin{figure}
		\centering
		\includegraphics[scale=0.4]{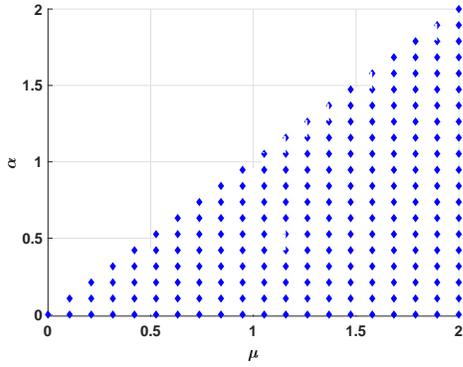}
		\caption{Feasible (diamond) pairs $(\mu,\alpha)$}
		\label{bidimension}
	\end{figure}
	For this example, the solution to (\ref{optimprob}) yields:
\begin{equation*}\begin{split}
		&Q= \begin{pmatrix} 12.5 & 0\\0 & 82 \end{pmatrix}, \Gamma=  \begin{pmatrix} 4.07 & 0.2\\0.19 & 36.3 \end{pmatrix}, K=\begin{pmatrix} -0.24 & 0\\0.33 & -0.08\end{pmatrix}\end{split}\end{equation*}
	Consider the following disturbance {defined over $t\in [0,25]$}:
	\begin{equation}\label{dist}
		(t,z)\mapsto d(t,z):= 5\begin{pmatrix} \sin(zt)\\ \cos(zt) \end{pmatrix}
	\end{equation}
	the initial condition: 
	\begin{equation*}
		(0,1) \ni z \mapsto X_0(z) = 10 \begin{pmatrix} \cos(4\pi z)-1\\\cos(2\pi z)-1\end{pmatrix}  
	\end{equation*} 
	and the saturation level $u_{\max}=\begin{pmatrix}0.3\\0.3\end{pmatrix}$.
	In Fig \ref{closedvsopen}, we report the evolution of the $\mathcal{L}^2$-norm of the closed-loop system state compared to that of the open-loop ($K=0$) in response to the disturbance (\ref{dist}). As expected, the gain $K$ designed via the proposed sufficient conditions provides better disturbance reduction and convergence rate than that in the open-loop case. 
	\begin{figure}
		\centering
		\includegraphics[scale=0.4]{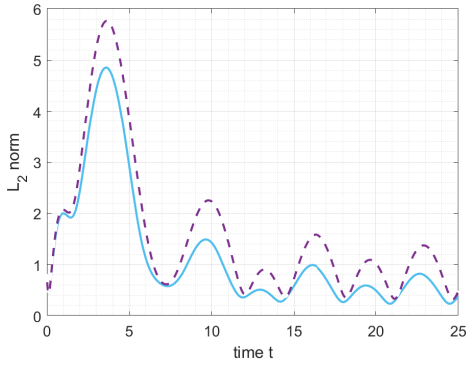}
		\caption{Time-evolution of the spatial norm $\mathcal{L}^2$ of $X(t,\cdot)$ in closed-loop (solid-line) and open-loop (dashed-line)}
		\label{closedvsopen}
	\end{figure}
	\begin{remark}
		In Fig \ref{saturation}, one can see the saturation levels of under which our controller perform. Thus, those results are in fact reflecting the behavior of this stabilizing design for the controller acting on the hyperbolic system (\ref{systemmm}).
	\end{remark}
	\begin{figure}
		\centering
		\includegraphics[scale=0.4]{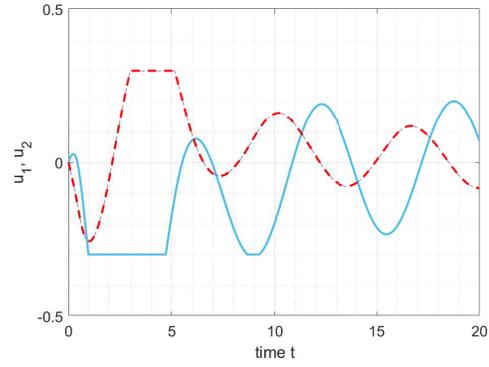}
		\caption{Time-evolution of $\sigma(K_1X_1(t,1))$ (solid-line) and $\sigma(K_2X_2(t,1))$ (dashed-line) with respect to time}
		\label{saturation}
	\end{figure}
	{Finally, Figure \ref{rightvsleft} shows the same time evolution of the $\mathcal{L}^2$- spatial norm of the closed-loop state $X(t)$ in comparison with that of the right-hand side of the dissipation inequality (\ref{SC3}) $$e^{-\frac{c_3}{2}t}{\left(\frac{c_2}{c_1}\right )}^\frac{1}{2} \norm{{X_0}}_{{\mathcal{L}^2_n}}+\frac{\chi}{\sqrt{c_1}} \sqrt{\int^t_0\norm{d(\theta)}^2_{{\mathcal{L}^2_q}} d\theta}$$ Looking at the plot, the formal computation is consistent with the stability result.}
	
	\begin{figure}
	    \centering
	    \includegraphics[scale=0.4]{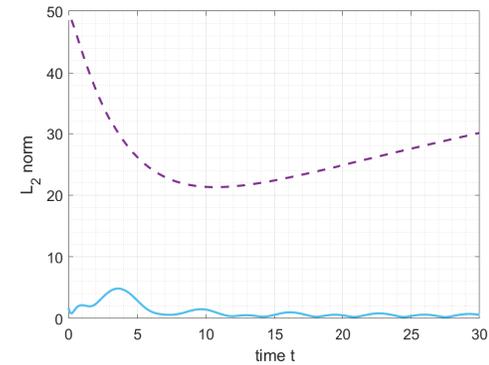}
	    \caption{Time-evolution of the left (solid-line) vs. right(dashed-line)-hand side of dissipation inequality (35)}
	    \label{rightvsleft}
	\end{figure}
	\section{Conclusion}\label{conclusion}
	Well-posedness and the global exponential stability of a class of 1D hyperbolic equations have been studied. The PDE under consideration was the result of a perturbed hyperbolic system in the presence of an in-domain exogenous disturbance connected in a feedback loop with a saturated nonlinear control law. The controller acted on the boundary condition. The well-posedness was investigated under the techniques of nonlinear semi-group theory proving the existence and uniqueness of a mild solution pair. Then, the approximation of this solution to a strong solution pair was established. Furthermore, sufficient conditions for the exponential stability  have been derived in the form of linear matrix inequalities using Lyapunov theory for infinite dimensional systems. Semi-definite programming tools
	were used to design the controller to minimize the effect of the disturbance and boundary nonlinearity on the $\mathcal{L}^2$ norm of the state. Numerical simulations were used to illustrate the effectiveness of the
	proposed control design strategy in an example.
	
	This article opens the horizon for some interesting questions. In particular, one can extend the research towards other classes of Lyapunov functions, as those considered in \cite{ahmadi2016dissipation} and to compare the consequent constraints with those present in Theorem \ref{lyapth}. Considering the extension of this application to the design of an observer is also a possible research track. 
\appendix
	\section{Auxiliary Results}	
	{\begin{definition}
		Let {$Y_1$ and $Y_2$} be linear normed spaces, $U$ be an open subset of {$Y_1$, $f: U \xrightarrow[]{} Y_2$}, and $x\in U$. We say that $f$ is Fréchet differentiable at $x$ if there exists $L\in \mathcal{L}(Y_1,Y_2)$ such that
		\begin{equation}
			\lim_{h\xrightarrow[]{} 0} \frac{\norm{f(x+h)-f(x) -Lh}_{Y_2}}{\norm{h}_{Y_1}}= 0
		\end{equation}
		In particular, $L$ is the Fréchet derivative of $f$ at $x$ and is denoted by $Df(x)$. When $Y_1=\mathbb{R}$, we denote $\dot{f}(x)= \lim_{h\xrightarrow[]{}0}\frac{f(x+h)- f(x)}{h}$
	\end{definition}
	\begin{lemma}
		Let $\Phi \in \mathcal{C}^0(0,1;\mathbb{R})$, $P \in \mathbb{D}^n_p$ and $\mathcal{L}^2(0,1;\mathbb{R}^n)$ be endowed with its standard inner product. Consider the following functional
		\begin{equation*}
			\begin{split}
				V:& \mathcal{L}^2(0,1;\mathbb{R}^n) \xrightarrow{} \mathbb{R}\\
				&X\mapsto V(X)\coloneqq \int^1_0 \Phi(z)\langle P X(z),X(z) \rangle dz
			\end{split}
		\end{equation*}
		Then, $V$ is Fr\'echet differentiable on $\mathcal{L}^2(0,1;\mathbb{R}^n)$ and in particular, for each $X,h\in \mathcal{L}^2(0,1;\mathbb{R}^n)$
		\begin{equation*}
			DV(X) h = 2 \langle \Phi P X, h\rangle_{{{\mathcal{L}^2_n}}}
		\end{equation*}
	\end{lemma}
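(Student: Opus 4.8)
The plan is to compute the increment $V(X+h)-V(X)$ explicitly, isolate its linear-in-$h$ part as the candidate derivative, and then verify that the leftover term is $o(\norm{h}_{\mathcal{L}^2_n})$. First I would expand the integrand pointwise. Since $P\in\mathbb{D}^n_p$ is symmetric, for each $z$ one has $\langle P(X(z)+h(z)),X(z)+h(z)\rangle = \langle PX(z),X(z)\rangle + 2\langle PX(z),h(z)\rangle + \langle Ph(z),h(z)\rangle$, where the two cross terms have been merged using symmetry of $P$. Integrating against $\Phi$ yields the decomposition $V(X+h)-V(X)=Lh+R(h)$, with candidate linear functional $Lh:=2\int_0^1\Phi(z)\langle PX(z),h(z)\rangle\,dz = 2\langle \Phi PX,h\rangle_{\mathcal{L}^2_n}$ and remainder $R(h):=\int_0^1\Phi(z)\langle Ph(z),h(z)\rangle\,dz$.

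Next I would check that $L\in\mathcal{L}(\mathcal{L}^2_n,\mathbb{R})$. Linearity is immediate, and for boundedness I would use that $\Phi\in\mathcal{C}^0(0,1;\mathbb{R})$ is continuous on the compact interval $[0,1]$, hence bounded by some $M>0$, together with the Cauchy--Schwarz inequality. This gives $\abs{Lh}\le 2M\norm{P}\,\norm{X}_{\mathcal{L}^2_n}\norm{h}_{\mathcal{L}^2_n}$, so $L$ is a bounded linear map and therefore a legitimate candidate for the Fréchet derivative at $X$.

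It then remains to show $\abs{R(h)}/\norm{h}_{\mathcal{L}^2_n}\to 0$ as $h\to 0$. Using again $\abs{\Phi(z)}\le M$ together with $\abs{\langle Ph(z),h(z)\rangle}\le \norm{P}\abs{h(z)}^2$, I would estimate $\abs{R(h)}\le M\norm{P}\int_0^1\abs{h(z)}^2\,dz = M\norm{P}\,\norm{h}^2_{\mathcal{L}^2_n}$, so that $\abs{R(h)}/\norm{h}_{\mathcal{L}^2_n}\le M\norm{P}\,\norm{h}_{\mathcal{L}^2_n}\to 0$. By the definition of the Fréchet derivative this establishes that $V$ is differentiable at every $X$ with $DV(X)h=Lh=2\langle \Phi PX,h\rangle_{\mathcal{L}^2_n}$, as claimed.

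There is no serious obstacle in this argument: the computation is entirely elementary once the symmetry of $P$ is exploited to collapse the two cross terms into a single linear term. The only point deserving care is the passage from pointwise bounds to the $\mathcal{L}^2$ estimates, where compactness of $[0,1]$ (supplying the uniform bound $M$ on $\Phi$) and the operator norm of $P$ must be invoked. The decisive structural feature is simply that $R$ is quadratic in $h$, which automatically renders it $o(\norm{h}_{\mathcal{L}^2_n})$ and yields the result.
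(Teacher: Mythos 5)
Your proof is correct and follows essentially the same route as the paper's: expand the quadratic form, identify $2\langle \Phi PX,h\rangle_{\mathcal{L}^2_n}$ as the linear part, and bound the remainder by $\norm{\Phi}_\infty\norm{P}\norm{h}^2_{\mathcal{L}^2_n}$ so that it is $o(\norm{h}_{\mathcal{L}^2_n})$. If anything, yours is slightly more careful, since you give a two-sided estimate $\abs{R(h)}\le M\norm{P}\norm{h}^2_{\mathcal{L}^2_n}$ and explicitly verify boundedness of the candidate derivative, whereas the paper only records a one-sided inequality before concluding.
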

	\begin{proof}
		For any $X,h \in \mathcal{L}^2(0,1;\mathbb{R}^n)$, one has
		\begin{equation*}\begin{split}
				V(X+h)- V(X)= &\int^1_0 \Phi(z)(\langle h (z), Ph(z)\rangle_{\mathbb{R}^n}\\
				&+ 2\langle X(z), Ph(z)\rangle _{\mathbb{R}^n})dz\\
				&\leq \lambda_{\max} (P) \norm{\Phi}_{\infty} \norm{h}^2 \\&+ 2\langle \Phi P X, h\rangle_{{{\mathcal{L}^2_n}}}
			\end{split}
		\end{equation*}
		Thus, it follows that
		$$\lim_{\norm{h}_{{{\mathcal{L}^2_n}}}\xrightarrow{} 0} \frac{|V(X+h)-V(X)- 2\langle X, \Phi Ph\rangle_{{{\mathcal{L}^2_n}}}|}{\norm{h}_{{{\mathcal{L}^2_n}}}}= 0$$
		This concludes the proof.
\end{proof}}

	\bibstyle{plain}        
	\bibliography{autosam}           

\end{document}